\documentclass[a4paper]{scrartcl}
\usepackage[a4paper,left=2.4cm,right=2.4cm]{geometry}

\usepackage{fontawesome5}
\usepackage{orcidlink}
\usepackage{graphicx}
\usepackage{amssymb}
\usepackage{amsmath}
\usepackage{amsthm}
\usepackage{amsfonts}
\usepackage[table]{xcolor}
\usepackage{comment}
\usepackage{mathtools}
\usepackage{tikz}
\usetikzlibrary{matrix}
\usepackage{hyperref}
\usepackage[multiple]{footmisc}
\usepackage{subcaption}
\usepackage{enumitem}
\usepackage{stackengine}
\usepackage{tikz}
\usepackage{relsize}
\usepackage{microtype}
\usepackage{caption}
\usepackage{thmtools}
\usepackage{thm-restate}
\usepackage{hyperref}
\usepackage{cleveref}

\usepackage[vlined]{algorithm2e}
\SetArgSty{textnormal}
\DontPrintSemicolon
\SetKwProg{Fn}{}{}{end}
\SetKwFor{ForEach}{for each}{do}{endfch}
\SetKw{Yield}{yield}
\SetKwFor{RepeatTimes}{repeat}{times}{}

\setcapindent{0pt}

\definecolor{darkred}{rgb}{0.545, 0, 0}
\definecolor{darkgreen}{rgb}{0, 0.392, 0}
\definecolor{darkblue}{rgb}{0, 0, 0.545}
\definecolor{darkcyan}{rgb}{0, 0.545, 0.545}
\definecolor{darkmagenta}{rgb}{0.545, 0, 0.545}
\definecolor{darkorange}{rgb}{1, 0.549, 0}
\definecolor{darkviolet}{rgb}{0.58, 0, 0.827}
\definecolor{darkyellow}{rgb}{0.545, 0.545, 0}
\definecolor{gold}{rgb}{1, 0.843, 0}
\definecolor{gold1}{rgb}{0.933, 0.788, 0}
\definecolor{gold2}{rgb}{1, 0.843, 0}
\definecolor{gold3}{rgb}{0.804, 0.678, 0}
\definecolor{gold4}{rgb}{0.545, 0.459, 0}

\newcommand{\walrus}{\mathrel{\coloneq}}

\newcommand{\OEIS}[1]{\textsc{Oeis} #1}
\newcommand{\bigO}[1]{\mathcal{O}(#1)}

\newcommand{\rrr}[1]{\textcolor{darkred}{#1}}
\newcommand{\ooo}[1]{\textcolor{darkorange}{#1}}
\newcommand{\yyy}[1]{\textcolor{gold3}{#1}}
\renewcommand{\ggg}[1]{\textcolor{darkgreen}{#1}}  
\newcommand{\bbb}[1]{\textcolor{darkblue}{#1}}
\newcommand{\vvv}[1]{\textcolor{darkmagenta}{#1}}

\newcommand{\BINARY}[2][]{\mathbf{B}_{#1}(#2)}  
\newcommand{\UNLABELED}[2][]{\mathbf{U}_{#1}(#2)} 
\newcommand{\BINARYzero}[1]{\BINARY[0]{#1}}
\newcommand{\PERMS}[2][]{\mathbf{S}_{#1}(#2)}     
\newcommand{\MATCHINGS}[2][]{\mathcal{M}_{#1}(#2)}
\newcommand{\MATCHINGSMaxSize}[1]{\MATCHINGS[\nu]{G}}

\newcommand{\torusOp}{\otimes}

\newcommand{\wordSet}[2][]{\langle #2 \rangle_{#1}}

\newcommand{\smallTimes}{\raisebox{0.1em}{\scalebox{0.7}{$\times$}}}

\newcommand{\flip}[1]{\overline{#1}}

\newcommand{\wildcard}[1]{\setlength{\fboxsep}{1.5pt}\fbox{#1}}

\newcommand{\pair}[2]{\begin{smallmatrix} #1\\ #2 \end{smallmatrix}}
\newcommand{\pairs}[1]{\big[#1\big]}

\newcommand{\proofcase}[1]{\noindent\textbf{Case #1:}}

    {\begin{enumerate}[nolistsep,noitemsep]}%
    {\end{enumerate}}
    
\newenvironment{tightitemize}%
    {\begin{itemize}[nolistsep,noitemsep]}%
    {\end{itemize}}

\declaretheorem[name=Theorem]{theorem}
\declaretheorem[name=Lemma, sibling=theorem]{lemma}

\declaretheorem[name=Definition, sibling=theorem, style=definition]{definition}
\declaretheorem[name=Remark, sibling=theorem, style=remark]{remark}

\newcommand{\Kn}[1]{K_{#1}}

\title{Shorthand Universal Tori for Permutations} 
\title{Shorthand Universal Tori for Permutations: Existence, Symmetry, and Generation of Twori}

\setkomafont{author}{\normalsize}

\ExplSyntaxOn
\NewDocumentCommand \authormails { m }
  {
    \seq_set_split:Nnn \l_tmpa_seq { \and } {#1}
    \seq_map_inline:Nn \l_tmpa_seq
      { \, \href{mailto:##1}{\footnotesize\faEnvelope[regular]} }
  }
\ExplSyntaxOff

\makeatletter
\gdef\@author{}
\newcommand*\ifargempty[1]{\if\relax\detokenize{#1}\relax\expandafter\@firstoftwo\else\expandafter\@secondoftwo\fi}
\renewcommand*\author[5]{%
  \ifx\@author\@empty\else\g@addto@macro\@author{\and}\fi
  \g@addto@macro\@author{%
    #1%
    \ifargempty{#3}{}{\,\,\authormails{#3}}%
    \ifargempty{#4}{}{\,\orcidlink{#4}}%
    \ifargempty{#5}{}{\thanks{#5}}%
    \\{\footnotesize\normalfont\def\and{\\}%
      \begin{tabular}[t]{@{}c@{}}#2\end{tabular}}%
  }%
}
\makeatother

\author{Tim Gerlach}{Universität Hamburg, Germany}{tim.gerlach@uni-hamburg.de}{https://orcid.org/0009-0004-6294-9235}{}
\author{Elizabeth Hartung}{Massachusetts College of Liberal Arts, United States}{e.hartung@mcla.edu}{https://orcid.org/0000-0002-4041-1862}{}
\author{Pia Herkenrath}{Universit\"at Kassel, Germany}{pia.herkenrath@mathematik.uni-kassel.de}{https://orcid.org/0009-0001-9504-297X}{}
\author{Joe Sawada}{University of Guelph, Canada}{jsawada@uoguelph.ca}{https://orcid.org/0000-0001-7364-2993}{}
\author{Aaron Williams}{Lakehead University, Canada\and Williams College, United States}{aaron.williams@lakeheadu.ca\and aaron.williams@williams.edu}{https://orcid.org/0000-0001-6816-4368}{}

\begin{document}

\maketitle

\begin{abstract}
A de Bruijn sequence packs all $n$-bit binary words into a cycle of length $2^n$.
A de Bruijn torus is the two-dimensional analogue in which each word appears exactly once in a rectangular window.
Here we consider the natural analogue for permutations using their shorthand representation (i.e., each permutation's final redundant value is omitted from the window).
We show that these tori exist when $n=2m+1$ is odd and the torus and windows have two rows (i.e., the torus is a ``tworus'').
These twori can be constructed with a high degree of symmetry.
More specifically, there are twori that can be partitioned into $2^{m-1}$ matching blocks where each block contains the same sequence of unordered columns.
Furthermore, given one such block we can generate each successive column of a tworus in amortized $\bigO{1}$-time.
We also prove non-existence results for certain sizes of tori and provide algorithms for constructing multiversal cycles (perfect necklaces) of unlabeled binary words. 
\end{abstract}

\section{Introduction}
\label{sec:intro}

Let $\BINARY{n}$ be the set of $n$-bit binary words.
A \emph{de Bruijn sequence} of $\BINARY{n}$ is a circular word of length $2^n$ where each word in $\BINARY{n}$ appears exactly once as a substring.
De Bruijn sequences are fundamental objects in Computer Science \cite{graham1994concrete,knuthseries}.
In two dimensions, a \emph{binary de Bruijn torus} of $\BINARY{n}$ is an $R$-by-$C$ toroidal grid with $R \cdot C = 2^n$ where each word in $\BINARY{n}$ appears exactly once in row major order in a rectangular $r$-by-$c$ window with $r \cdot c = n$.
Figure \ref{fig:torusExamples_binary} has an example with $(R,C;r,c) = (8,32;2,4)$ for $n=8$.
The concept is also known as a \emph{binary array} \cite{ma1984note}, \emph{de Bruijn array} \cite{fan85debruijn}, \emph{perfect map}~\cite{paterson1994perfect}, and \emph{pseudo-random array} (with no all-zero window) \cite{macwilliams1976pseudo}.
Applications include efficient position encoding and decoding \cite{shiu1997decoding,horan2016locating,etzion2024bruijn}.

\begin{figure}[h]
    \centering
    \begin{subfigure}{0.4925\columnwidth}
        \centering
        \includegraphics[width=\textwidth,page=3]{torusExamples.pdf}
        \caption{A binary de Bruijn torus with shape $(R,C;r,c)=(8,32;2,4)$.
        The $8$-by-$32$ toroidal grid has a unique $2$-by-$4$ window for each $n$-bit word with $n = r \cdot c = 8$.}
        \label{fig:torusExamples_binary}        
    \end{subfigure}
    \hfill
    \begin{subfigure}{0.4925\columnwidth}
        \centering
        \includegraphics[height=1.35in,page=5]{torusExamples.pdf}
        \caption{A shorthand universal torus of permutations for $(6,20;2,2)$.
        The $6$-by-$20$ torus has a unique $2$-by-$2$ window for each permutation with $n = r \cdot c + 1 = 5$.}
        \label{fig:torusExamples_shorthand}        
    \end{subfigure}
    \caption{Universal tori in (a) and (b) with their lexicographically smallest windows highlighted.
    The $2$-by-$3$ windows of (a) encode distinct binary words in $\BINARY{n}$ for $n=8$ in row major order while the $2$-by-$2$ windows of (b) encode distinct $(n{-}1)$-permutations $\PERMS[n-1]{n}$ for $n=5$.
    Each $\PERMS[n-1]{n}$ is shorthand for a permutation in $\PERMS{n}$, so (b) is a shorthand universal torus of permutations.
    }
    \label{fig:torusExamples}
\end{figure}

The term \emph{universal cycle} is used for de Bruijn sequences of other objects \cite{chung1992universal} and here we consider permutations.
Let $\PERMS{n}$ be the set of permutations of $\{0, 1,\ldots,n-1\}$ in one-line notation and $\PERMS[k]{n}$ be the set of $k$-permutations (e.g., $\PERMS[2]{3} = \{01,02,10,12,20,21\}$).
When $n > 2$ it is not possible to create a circular word of length $n!$ in which every word in $\PERMS{n}$ appears exactly once as a substring.
For example, if $0 \, 1 \, 2 \, \wildcard{\phantom{0}} \, \wildcard{\phantom{1}} \, \wildcard{\phantom{2}}$ were a sequence for $n=3$, then the first $\wildcard{\phantom{0}}$ is $\wildcard{0}$ as $1 \, 2 \, \wildcard{\phantom{0}}$ is a window; continuing this way gives $012012$ which does not suffice.
While permutations cannot be directly stored within a universal cycle, they can be stored indirectly.
For example, $012032$ stores the permutations using relative order (e.g., $203$ encodes $102$) and in general $n+1$ distinct symbols are sufficient \cite{johnson2009universal}.
Other approaches to containing all permutations have been considered \cite{engen2020containing} including shorthand described below.

A \emph{shorthand universal cycle} of $\PERMS{n}$ omits the final (redundant) value in each permutation.
More precisely, it is a circular word of length $n!$ in which every word in $\PERMS[n-1]{n}$ appears exactly once as a length $n-1$ substring.
Each word in $\PERMS[n-1]{n}$ encodes a unique permutation by suffixing its missing symbol.
For example, $01\underline{20}21$ is suitable for $n=3$ and its underlined substring $20 \in \PERMS[2]{3}$ encodes $201 \in \PERMS{3}$.
These cycles exist for all $n$ \cite{jackson1993universal} and can be generated efficiently \cite{ruskey2010explicit,holroyd2012shorthand,chang2026efficient}.
Here we introduce a natural two-dimensional analogue.

A \emph{shorthand universal torus} of $\PERMS{n}$ is an $R$-by-$C$ toroidal grid with $R \cdot C = n!$ in which every word in $\PERMS[n-1]{n}$ appears exactly once in row major order within a rectangular $r$-by-$c$ window with $r\cdot c=n-1$.
Figure \ref{fig:torusExamples_shorthand} has an example for $(R,C;r,c) = (6,20;2,2)$ and $n=5$.

An initial question is for which values of $(R,C;r,c)$ these tori exist.
We view this as a \emph{Gray code} question \cite{savage1997survey,mutze2023combinatorial} as the tori are patterns of a combinatorial object $\PERMS{n}$ with certain restrictions.
A second question is if these tori can be constructed efficiently.
We view this as a \emph{combinatorial generation} question \cite{ruskey2003combinatorial} as the goal is the efficient generation of $\PERMS{n}$.

A recent trend in Gray codes is the creation of orders with high degrees of symmetry \cite{gregor2024hamilton}.
A notable example involves the \emph{middle levels} (i.e., the subset of $\BINARY{2m+1}$ with words having $m$ or $m+1$ copies of $1$).
A Gray code exists with $2n+1$ blocks with rotational symmetry \cite{merino2021combinatorial,merino2022combinatorial} which strengthens the famous middle levels theorem \cite{mutze2016proof,mutze2024book} and answers a challenge posed by Knuth \cite{knuthseries}.
Other notions of symmetry have been investigated \cite{sawada2019solving,rytter2021syntactic}.
Combinatorial generation motivates this trend as repeated patterns are easier to generate.

We prove that shorthand universal tori of $\PERMS{n}$ exist when $n=2m+1$ and both the torus and windows have two rows.
Moreover, our proof is constructive and highly symmetric.
We say that a pair of columns \emph{match} if they contain the same unordered set of values.
More broadly, two blocks of length $k$ (i.e., two runs of $k$ contiguous columns) \emph{match} if their columns pairwise match.
For example, $\pairs{\pair{0}{2} \pair{1}{3} \pair{0}{4}}$ and $\pairs{\pair{2}{0} \pair{1}{3} \pair{4}{0}}$ are two matching blocks.

\begin{restatable}{theorem}{tworusExists}\label{thm:tworus}
    There exist shorthand universal tori of permutations $\PERMS{n}$ of size $2$-by-$\frac{n!}{2}$ with $2$-by-$m$ windows for all odd $n=2m+1$.
    Moreover, they partition into $2^{m-1}$ matching blocks.
\end{restatable}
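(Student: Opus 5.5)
The plan is to separate the problem into an \emph{unordered} layer (which pairs of values occupy each column) and an \emph{orientation} layer (which value of each pair sits in the top row), and to build each layer separately.

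First I would note what the windows look like. A $2$-by-$m$ window covers $m$ consecutive columns and reads $2m=n-1$ values in row-major order, so these values must be distinct. Hence the unordered columns in any window form a matching of size $m$ in $\Kn{n}$ that misses exactly one vertex. Because the torus has only two rows, every position has two windows: the one starting in the top row and the one starting in the bottom row. The second is the first with every column flipped.

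The unordered layer is a circular sequence $U$ of unordered pairs in which every $m$ consecutive pairs are disjoint. I would ask that every \emph{ordered} $m$-matching (an ordered list of $m$ disjoint pairs) occur exactly once as $m$ consecutive entries of $U$. There are $(2m+1)!/2^m$ such ordered matchings, so $U$ has length $L=(2m+1)!/2^m$. Then $n!/2 = 2^{m-1}L$, so the tworus will be $2^{m-1}$ copies of $U$, one per block, and these blocks match by construction.

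To build $U$, I would use a de Bruijn-type graph:
\begin{itemize}
\item the vertices are the ordered $(m-1)$-matchings;
\item the edges are the ordered $m$-matchings, each going from its first $m-1$ pairs to its last $m-1$ pairs.
\end{itemize}
The graph is balanced: an $(m-1)$-matching leaves $3$ vertices uncovered, so every vertex has in-degree and out-degree $\binom{3}{2}=3$. An Eulerian circuit therefore exists once the graph is connected. To prove connectivity I would show that any single pair can be replaced, and any two adjacent pairs swapped, by a short walk that shifts through the $3$ free vertices. This is the routine but fiddly part.

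The orientation layer assigns to column $i$ of the torus a flip bit $o_i$, meaning which value of the pair is on top. Fix a position $j$ of $U$ and the ordered matching $M$ starting there. Across the $2^{m-1}$ copies, together with the two row-starts per copy, the windows with underlying matching $M$ must realize all $2^m$ orientations. Up to a global flip, a window's orientation is determined by the $m-1$ differences $d_i = o_i \oplus o_{i+1}$ inside it. The requirement therefore becomes the following: for each $j$, the $(m-1)$-bit words $d_{kL+j}\cdots d_{kL+j+m-2}$ for $k=0,\dots,2^{m-1}-1$ are all distinct. In addition, $\sum_i d_i$ must be even so that the orientations close up around the torus.

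My first attempt would set $d_{kL+j} = B_{(k+j) \bmod 2^{m-1}}$, where $B$ is a binary de Bruijn sequence of order $m-1$. Inside a copy, the window at offset $j$ in copy $k$ is then the length-$(m-1)$ substring of $B$ starting at $k+j$. As $k$ varies these are all distinct. The parity condition holds for $m\ge 3$, since $B$ contains $2^{m-2}$ ones. The cases $m=1,2$ I would check directly.

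The main obstacle is the seam between consecutive copies. The windows that straddle position $kL$ see $B$ indexed by $k+j$ on one side and by $k+1+(j-L)$ on the other. These agree only if $L\equiv 1 \pmod{2^{m-1}}$, which generally fails since $L$ is typically even. To fix this, I would:
\begin{itemize}
\item replace the index $k+j$ by $k+\phi(j)$ for a function $\phi$ that increases by $1$ at each step except at one chosen location of $U$;
\item at that location, insert a jump chosen to absorb the discrepancy $L-1 \bmod 2^{m-1}$;
\item then check directly that the $m-1$ windows crossing the jump still see distinct words as $k$ varies. One way is to choose the jump so that each of these windows is a fixed rotation-invariant rearrangement of a de Bruijn window.
\end{itemize}
If this proves too delicate, I would instead reorder the Eulerian circuit so that $U$ begins with a convenient stretch near the seam.
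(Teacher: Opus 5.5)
\textbf{Gap: the seam step of the orientation layer.} Your unordered layer matches the paper's: an Eulerian circuit in the graph on ordered $(m-1)$-matchings, with $2^{m-1}$ copies as the matching blocks. Your connectivity argument is only sketched, but it is the same substitution idea the paper uses. Your reduction of the orientation layer is also correct. For each position $j$ of $U$, the $2^{m-1}$ orientation words of length $m$ starting at $j+kL$ must be pairwise distinct and pairwise non-complementary. That is, you need a multiversal cycle of $\UNLABELED{m}$ with multiplicity $L$; in your difference language, a multiversal cycle of $(m-1)$-bit words with even total weight. You never actually produce this object, and the repair you sketch cannot work in general.

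Under your ansatz $d_{kL+j}=B_{k+\phi(j)}$, the $L$ index steps in one period must sum to $1 \bmod 2^{m-1}$. These are $\phi(j+1)-\phi(j)$ together with the seam step $1+\phi(0)-\phi(L-1)$. Moreover, $L=n!/2^m$ is even for every $m\ge 2$.
\begin{itemize}
\item Take $m=3$. The only de Bruijn sequence of order $2$ is $B=0011$ up to rotation. Every $d$-window contains exactly one step $s$, and $x\mapsto(B_x,B_{x+s})$ is injective on $\mathbb{Z}_4$ only for odd $s$. But $L=630$ odd steps cannot sum to $1 \bmod 4$. So no choice of $\phi$, with one jump or many, works for $n=7$.
\item The rearrangement trick applies only to a jump of step $2-m$. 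For any other step, a straddling window does not read consecutive positions of $B$.
\item A single jump of step $2-L$ is generally not of that form. $J$ jumps of step $2-m$ absorb $J(1-m)$, which is even when $m$ is odd and so never congruent to the odd discrepancy $1-L$.
\item Reordering the Eulerian circuit cannot help, since the condition on $d$ does not depend on $U$ at all.
\end{itemize}

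\textbf{Missing idea: vary which word each copy uses, instead of shifting one sequence.} The paper orients copy $k$ by $\alpha_k^{L/m}$, where $\alpha_0,\ldots,\alpha_{2^{m-1}-1}$ is the lexicographic order of $\BINARYzero{m}$. This uses $m \mid L$ from Remark~\ref{rem:div}.
\begin{itemize}
\item Windows inside a copy are then a fixed rotation of $\alpha_k$. These form a complete complement-free set by Lemma~\ref{lem:unlabeledSame}.
\item Windows across a seam straddle lexicographically consecutive words $\alpha_k,\alpha_{k+1}$, including the cyclic wrap from $01^{m-1}$ to $0^m$. Lemma~\ref{lem:unlabeledNext} handles these by a case analysis on the lexicographic successor.
\end{itemize}
That lemma is exactly the seam argument your plan is missing.
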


\begin{figure}[t]
    \centering
    \begin{subfigure}{0.48\columnwidth}
        \centering
        \includegraphics[width=\textwidth,page=2]{constructionBlocks.pdf}
        \caption{A shorthand universal tworus for $\PERMS{5}$.}
        \label{fig:blocks_blocks}
    \end{subfigure}
    \hfill
    \begin{subfigure}{0.51\columnwidth}
        \begin{subfigure}{\columnwidth}
            \centering
            \includegraphics[scale=0.475,page=3]{constructionBlocks.pdf}
            \caption{A universal cycle of $\MATCHINGS[2]{\Kn{5}}$.}
            \label{fig:blocks_matching}
        \end{subfigure}
        \begin{subfigure}{\columnwidth}
            \smallskip
            \centering
            \includegraphics[scale=0.475,page=4]{constructionBlocks.pdf}
            \caption{Block $B_1$'s columns order (b)'s edges using $00\cdots$.}
            \label{fig:blocks_B1}
        \end{subfigure}
        \begin{subfigure}{\columnwidth}
            \smallskip
            \centering
            \includegraphics[scale=0.475,page=5]{constructionBlocks.pdf}
            \caption{Block $B_2$'s columns order (b)'s edges using $01\cdots$.}
            \label{fig:blocks_B2}
        \end{subfigure}
    \end{subfigure}
    \caption{A tworus for $n=2m+1=5$ in (a).
    It is a shorthand universal torus of dimensions $(R,C;r,c) = (2,60;2,2)$ for $\PERMS{5}$, or equivalently, a universal torus for $\PERMS[4]{5}$.
    It contains $2^{m-1}=2$ matching blocks of length $\frac{n!}{2^m} = 30$.
    The unordered columns of each block form a universal cycle of ordered maximum matchings in the complete graph $\Kn{5}$ in (b).
    The columns are ordered with bits from a multiversal cycle of unlabeled binary words $\BINARY[0]{m} = \{00,01\}$ as in (c)--(d).
    }
    \label{fig:blocks}
\end{figure}


Throughout the paper we let $[n] = \{0,1,\ldots,n{-}1\}$ and we index our words and sequences with $0$-based indices that are taken modulo their length.
For brevity (and levity) we use the term \emph{tworus} to refer to a torus with $R=2$ rows using windows with $r=2$ rows.

Section \ref{sec:n5} provides existence and non-existence results obtained via exhaustive computation.
Section \ref{sec:uniMatchings} provides new results on universal cycles for ordered matchings and Sections \ref{sec:binary}--\ref{sec:unlabeled} provides multiversal cycles of (unlabeled) binary words.
Section \ref{sec:tworiPerms} constructs our twori using a universal cycle of ordered matchings as the repeating block structure, and by orienting the pairs in successive blocks using a multiversal cycle.
Section \ref{sec:alg} provides algorithms including $\bigO{1}$-time per column generation of a twori given a suitable universal cycle.

\section{Exhaustive Search with 2-by-2 Windows}
\label{sec:n5}

The smallest possible examples of shorthand universal tori of $\PERMS{n}$ with non-linear windows are for $n=5$ with $2$-by-$2$ windows.
Given these choices a suitable torus must have size $R$-by-$C$ with $R \cdot C = 120$.
It is also clear that $R, C > 1$ since otherwise the windows contain repeated values.
Finally, without loss of generality we can also assume that $R \leq C$.

Two of the authors independently wrote backtracking programs to exhaustively search for existence in each of the seven possible cases: $R \in \{2,3,4,5,6,8,10\}$.
A window 
$[\begin{smallmatrix} a & c \\ b & d \end{smallmatrix}]$
is \emph{smaller} than 
$[\begin{smallmatrix} w & y \\ x & z \end{smallmatrix}]$
if $abcd < wxyz$ in lexicographic order.  
The searches find the shorthand universal torus (if one exists) with the smallest windows as they are found top-to-bottom, then left-to-right.
The examples that exist are in Figure \ref{fig:n5} and the existence results are summarized in Table \ref{tab:exist5}.
We prove two of the non-existence results by hand in Section \ref{sec:nexists}.
\begin{restatable}{theorem}{tThreeNonexistence}\label{thm:t3_nonexistence}
    There is no $3$-by-$40$ shorthand universal torus for $\PERMS{5}$ with $2$-by-$2$-windows.
\end{restatable}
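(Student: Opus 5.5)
The plan is a direct pigeonhole argument on two adjacent columns, using only the toroidal wrap-around in the vertical direction. Suppose, for contradiction, that a $3$-by-$40$ shorthand universal torus for $\PERMS{5}$ with $2$-by-$2$ windows exists. Since the grid is toroidal with $R=3$ rows, the $2$-by-$2$ windows starting in a fixed column $j$ use the row pairs $(0,1)$, $(1,2)$, and $(2,0)$, where the last wraps around vertically. Each window encodes a word of $\PERMS[4]{5}$, so its four entries must be pairwise distinct.

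Fix adjacent columns $j$ and $j+1$ with entries $(a,b,c)$ and $(d,e,f)$ in rows $0,1,2$. The three windows on these columns contain exactly the entries
\[
\{a,b,d,e\}, \qquad \{b,c,e,f\}, \qquad \{c,a,f,d\}.
\]
\begin{itemize}
\item \emph{Within a column.} Every pair of rows occurs in some window, so $a,b,c$ are pairwise distinct, and likewise $d,e,f$.
\item \emph{Across the two columns.} The entry $a$ shares a window with $d$ and $e$ (the first window) and with $f$ and $d$ (the third window). Hence $a \notin \{d,e,f\}$. By the cyclic symmetry of the three row pairs, the same holds for $b$ and $c$.
\end{itemize}
So $\{a,b,c\}$ and $\{d,e,f\}$ are disjoint $3$-element subsets of $[5]$. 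This is impossible, because $3+3>5$. Therefore no such torus exists.

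The step that needs care is the bookkeeping in the second bullet: one must check that, because the torus wraps vertically, every cell of column $j$ lies in a common window with every cell of column $j+1$. With only three rows, each row pair is adjacent modulo $3$, so this holds; the explicit list of the three windows above is what verifies it.

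The argument never uses the number of columns, so it shows more generally that no $3$-by-$C$ shorthand universal torus for $\PERMS{5}$ with $2$-by-$2$ windows exists for any $C$. The value $C=40$ only reflects the constraint $R\cdot C=120$.
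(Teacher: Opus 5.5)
Your proof is correct. It shares its key ingredient with the paper: the paper's Lemma~\ref{lemma:t3_adj} states exactly your two bullets (no column repeats a symbol, and adjacent columns share no symbol). You also verify this more carefully than the paper does, by listing the three windows and using the vertical wrap-around.

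The two routes diverge at the finish.
\begin{itemize}
\item The paper argues globally. Each symbol must occur $120/5=24$ times. By the lemma a symbol occurs at most once per column and never in two adjacent columns, so it occurs at most $20$ times among $40$ columns (the paper phrases this as needing $48$ columns).
\item You argue locally. Two adjacent columns would already require $3+3=6$ distinct symbols from a $5$-letter alphabet.
\end{itemize}

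Your finish is shorter. It does not need the per-symbol frequency, which the paper asserts ``by symmetry'' but which really rests on a small double count: each symbol lies in $96$ of the $120$ windows, and each cell lies in $4$ windows. It also does not use the number of columns, so it rules out any $3$-row toroidal array whose $2$-by-$2$ windows have distinct entries from $[5]$.

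The paper's counting style has its own advantage: it is the template reused for the $5$-by-$24$ case (Lemma~\ref{lemma:t5_double}). There, cells in adjacent columns need not share a window, and a single pair of adjacent columns can be locally consistent, so your local pigeonhole argument would not carry over.
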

\begin{restatable}{theorem}{tFiveNonexistence}\label{thm:t5_nonexistence}
    There is no $5$-by-$24$ shorthand universal torus for $\PERMS{5}$ with $2$-by-$2$-windows.
\end{restatable}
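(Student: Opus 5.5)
The plan is to turn the global uniqueness condition into rigid local constraints on where each single symbol can sit, and then combine the symbols. Fix a symbol $x\in[5]$. A $2$-by-$2$ window lists four distinct values, so $x$ occurs at most once in it. Exactly $120-24=96$ of the words in $\PERMS[4]{5}$ contain $x$, so $x$ lies in exactly $96$ of the $120$ windows. Every cell of the $5$-by-$24$ torus lies in exactly $4$ windows. Hence $x$ occupies exactly $24$ cells, and these cells cover $96$ windows with no window counted twice. Equivalently, the $x$-cells form an independent set of size $24$ in the toroidal king graph: no two $x$-cells lie in a common $2$-by-$2$ window. Everything below uses only this fact for each of the five symbols, together with the fact that every cell holds exactly one symbol.

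Next I would pin down the column profile of $x$. Let $c_j$ be the number of $x$'s in column $j$.
\begin{itemize}
\item Each column is a $5$-cycle of cells with adjacent cells in a common window, so $c_j\le 2$.
\item If $c_j=2$, the two $x$'s sit at rows $i$ and $i+2$. Every row of column $j+1$ is then within cyclic distance $1$ of $i$ or $i+2$, so $c_{j+1}=0$.
\item If $c_j=1$, then $c_{j+1}\le 1$.
\end{itemize}
So $c_j+c_{j+1}\le 2$ for every $j$. Summing over all $24$ cyclic pairs gives $48\ge 2\sum_j c_j=48$, so every consecutive sum is exactly $2$. Thus the profile of $x$ is either constantly $1$ (call $x$ of type $C$) or alternates $2,0$ (type $A$ if the $2$'s fall on even columns, type $B$ if on odd columns).

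In the type-$C$ case the row of $x$ must move by $\pm 2 \pmod 5$ from each column to the next. In the alternating case the pair $\{i,i+2\}$ in one column forces a specific relation with the pair two columns later. Since every column holds $5$ cells, the numbers $a,b,c$ of symbols of each type satisfy $2a+c=5=2b+c$ and $a+b+c=5$. This leaves only $(a,b,c)\in\{(0,0,5),(1,1,3),(2,2,1)\}$.

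The final step, which I expect to be the main obstacle, is to rule out each of these three cases. I would first treat columns as $5$-cycles. In case $(0,0,5)$, each column is a cyclic arrangement of all five symbols, and the passage to the next column must move every symbol by $\pm2$ simultaneously. A bijection with all displacements $\pm2$ on $\mathbb{Z}_5$ is essentially a global shift by $+2$ or $-2$. Then each column is a rotation of the previous one, so every window occurs with multiplicity determined by only $5$ distinct column orderings up to rotation. One can then check that the window sets repeat, contradicting that all $120$ windows are distinct.

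For the mixed cases I would use a second counting identity. Each ordered vertical pair $(a,b)$ is the left column of exactly one window per occurrence, and there are $3\cdot 2=6$ completions. So each ordered vertical pair occurs exactly $6$ times, and symmetrically for horizontal pairs. This should contradict the forced patterns: in a type-$A$ symbol's $2$-columns, $x$ is vertically adjacent to three specific cells, while in $0$-columns it contributes no vertical pairs at all. This skews the counts of pairs involving $x$ in a way I expect to be incompatible with the uniform value $6$. If that count does not close the case directly, I would fall back on a short case analysis of the finitely many ways to place the $x$-pairs $\{i,i+2\}$ compatibly over two consecutive columns.
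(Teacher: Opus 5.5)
Your reduction is sound and in places cleaner than the paper's. The exact count of $24$ cells per symbol and the bound $c_j+c_{j+1}\le 2$ summed to equality give a tidier derivation of the paper's lemma that a doubled symbol forces the alternating $2,0$ pattern. The equations $2a+c=5=2b+c$ are correct, and your $(0,0,5)$ case is the paper's lemma that not every column can contain all five symbols. (The ``global shift'' claim holds: displacements $d_i\in\{\pm 2\}$ of a bijection of $\mathbb{Z}_5$ satisfy $\sum_i d_i\equiv 0 \pmod 5$, which forces them all to be equal.)

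The gap is the step you flagged. For $(1,1,3)$ and $(2,2,1)$ you only say you expect the vertical-pair counts involving a type-$A$ symbol $x$ to be skewed, and as phrased this cannot work. Every symbol occupies exactly $24$ cells, each with one cell directly above and one directly below, so the totals are always $24=4\cdot 6$. No aggregate count involving $x$ is ever contradicted. A contradiction can only come from \emph{which} symbols may occupy the cells next to $x$, and that observation is missing from the proposal.

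The missing idea is this. In both mixed cases $a=b\ge 1$, so there is a type-$A$ symbol $s$ and a type-$B$ symbol $t$. Even columns contain no $t$ and odd columns contain no $s$, so $s$ and $t$ never share a column. Hence the ordered vertical pair $s$-above-$t$ occurs $0$ times. This contradicts your own identity that it occurs exactly $6$ times; for example, the window of the word $s\,u\,t\,v$ needs it. This closes both mixed cases at once.

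It is precisely the paper's argument: its lemma forbidding two distinct doubled symbols (case 2: no window has $s$ above $t$ in a column), combined with the pigeonhole step that a column next to a doubled-$s$ column lacks $s$ and so must double some other symbol. In your framework that pigeonhole step is just $a=b$. The same identity also gives a quicker finish for $(0,0,5)$: if all columns are rotations of one cyclic arrangement, only $5$ of the $20$ ordered vertical pairs ever occur.
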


\begin{figure}[ht]
    \centering
    \begin{subfigure}{0.75\columnwidth}
        \begin{subfigure}{1.0\columnwidth}
            \centering
            \includegraphics[scale=0.6,page=2]{n5LexLeast.pdf}
            \caption{$2$-by-$60$}
            \label{fig:n5_2_60}
        \end{subfigure}
        \begin{subfigure}{0.59\columnwidth}
            \centering
            \includegraphics[scale=0.6,page=3]{n5LexLeast.pdf}
            \caption{$4$-by-$30$}
            \label{fig:n5_4_30}
        \end{subfigure}
        \begin{subfigure}{0.39\columnwidth}
            \centering
            \includegraphics[scale=0.6,page=4]{n5LexLeast.pdf}
            \caption{$6$-by-$20$}
            \label{fig:n5_6_20}
        \end{subfigure}
    \end{subfigure}
    \hfill
    \begin{subfigure}{0.24\columnwidth}
        \centering
        \includegraphics[scale=0.6,page=5]{n5LexLeast.pdf}
        \caption{$8$-by-$15$}
        \label{fig:n5_8_15}
    \end{subfigure}
    \caption{The lexicographically least shorthand universal tori for $n=5$ and $2$-by-$2$ windows of the sizes in Table \ref{tab:exist5}.
    Note that our tworus in Figure \ref{fig:blocks_blocks} is not the same as the one here in (a).}
    \label{fig:n5}
    \medskip
    \small
    \begin{tabular}{|@{\;\;}c@{\;\;}|@{\;\;}c@{\;\;}|@{\;\;}c@{\;\;}|@{\;\;}c@{\;\;}|@{\;\;}c@{\;\;}|@{\;\;}c@{\;\;}|@{\;\;}c@{\;\;}|@{\;\;}c@{\;\;}|} \hline
    $1 \smallTimes 120$ & $2 \smallTimes 60$ & $3 \smallTimes 40$ & $4 \smallTimes 30$ & $5 \smallTimes 24$ & $6 \smallTimes 20$ & $8 \smallTimes 15$ & $10 \smallTimes 12$ \\ \hline
    no & yes & no & yes & no & yes & yes & no \\ \hline
    \end{tabular}
    \captionof{table}{Existence of shorthand universal tori of size $R$-by-$C$ for the permutations $\PERMS{5}$ with $2$-by-$2$ windows.
    These results were obtained by exhaustive computation using backtracking.
    }
    \label{tab:exist5}
\end{figure}

Table \ref{tab:exist5} suggests that existence for shorthand universal tori of permutations is more complicated than for binary and $q$-ary de Bruijn tori.
During the \emph{Generalizations of de Bruijn cycles and Gray Codes} workshop in 2004, Ron Graham asked if $q$-ary de Bruijn tori can be constructed for $q>1$ given (i) $RC = q^n$, (ii) $R > r$, and (iii) $C > c$; see Question 1 of Problem 480 in \cite{jackson2009research}.
Note that $R \geq r$ is necessary in this context, and $R>r$ is necessary unless the torus is a horizontal sequence (i.e., $R=r=1$) as otherwise the all-zero window is repeated.
Similarly, (iii) is necessary unless $C=c=1$.
Thus, the question is if the natural necessary conditions for a $q$-ary de Bruijn torus are sufficient.
Earlier papers conjectured similar conditions and verified them for special cases including when $q=2$ \cite{paterson1994perfect} or $q$ is a prime power \cite{paterson1996new}.
Note (ii) and (iii) are not necessary for shorthand universal tori of permutations.
Indeed our primary focus is on the tworus case of $R=r=2$.
Our Theorem \ref{thm:tworus} also proves one case of Graham's question for $rc$-permutations; see Question 2 of Problem 480 in \cite{jackson2009research}.

\section{Universal Cycles for Ordered Matchings}
\label{sec:uniMatchings}


Here we consider the pattern of the blocks in our tworus construction. 
Each column in the pattern is an unordered pair.
The pairs in each column are ordered in different ways in different blocks, as determined in Section \ref{sec:tworiPerms}, to create a tworus. 


We formalize the pattern as a universal cycle of ordered matchings.
We prove that they exist using the approach from de Bruijn's original paper \cite{de1946combinatorial}.
That is, we argue that they are in one-to-one correspondence with the Eulerian circuits in an Eulerian transition graph%
\footnote{Some authors define the transition graph as the line graph of our notion of a transition graph.}.

An \emph{ordered matching} in an undirected graph $G = (V, E)$ is an ordered list of distinct undirected edges $L = [e_1, e_2, \ldots, e_k]$ in which $M  = \{e_1, e_2, \ldots, e_k\}$ is a matching.
Let $\MATCHINGS[k]{G}$ be $G$'s set of ordered matchings of size $k$.
For example, $\MATCHINGS[1]{G} = \{[e] \mid e \in E\}$ for $k=1$.

A \emph{universal cycle of $\MATCHINGS[k]{G}$} is a cyclic sequence containing all $L \in \MATCHINGS[k]{G}$ exactly once.
Each edge $e = \{u, v\} \in E$ appears as an unordered indivisible unit in these universal cycles.
An example $\pairs{\pair{0}{1}, \pair{2}{3}, \pair{0}{1}, \pair{2}{4}, \ldots}$ for $n=5$ previously appeared in Figure \ref{fig:blocks_matching} where each $\pair{u}{v}$ denotes $\{u, v\}$.
The empty sequence $\epsilon$ suffices when $\MATCHINGS[k]{G} = \emptyset$.


We are primarily interested in universal cycles for the near-perfect matchings of size $m$ in the complete graph $\Kn{n}$ when $n=2m+1$;
see Figure \ref{fig:transitionMatching_maximum5} for the transition graph when $n=5$.
However, we prove a more general result in Theorem \ref{thm:uniMatchings}.
The most challenging part of the proof is connectivity, and our substitution argument is illustrated in Figure \ref{fig:substitution}.

\begin{figure}
    \centering
    \begin{subfigure}{0.49\columnwidth}
        \begin{subfigure}{1.0\columnwidth}
            \centering
            \includegraphics[scale=1.2,page=14]{transitionGraphs.pdf}
            \caption{Neighborhood of $L = \pairs{\pair{0}{1}, \pair{2}{3}, \pair{4}{5}}$ in the transition graph for $\MATCHINGS[k]{\Kn{n}}$ with $k=4$ and $n=10$.
            The highlighted edge encodes the ordered matching $\pairs{\pair{6}{7}, \pair{0}{1}, \pair{2}{3}, \pair{4}{5}}$ of size $k=4$ in $\Kn{10}$.
            }
            \label{fig:transitionMatching_neighborhood}
        \end{subfigure}
        \begin{subfigure}{1.0\columnwidth}
            \centering
            \includegraphics[scale=0.95,page=10]{transitionGraphs.pdf}
            \caption{
            The transition graph of $\MATCHINGS[k]{\Kn{n}}$ for $n=4$ and $k=2$ is disconnected.
            }
            \label{fig:transitionMatching_disconnected}
        \end{subfigure}
    \end{subfigure}
    \hfill
    \begin{subfigure}{0.49\columnwidth} 
        \centering
        \includegraphics[scale=0.925,page=11]{transitionGraphs.pdf}
        \caption{
        Transition graph of $\MATCHINGS[2]{\Kn{5}}$. 
        The edge labels (omitted) on its Eulerian circuits are universal cycles for ordered maximum matchings in~$\Kn{5}$ (e.g. Fig.~\ref{fig:blocks_matching}).
        }
        \label{fig:transitionMatching_maximum5}
    \end{subfigure}
    \caption{Transition graphs for ordered matchings in a complete graph $\MATCHINGS[k]{\Kn{n}}$.
    The node set of the transition graph is $\MATCHINGS[k-1]{\Kn{n}}$ and each node has in-degree and out-degree $\binom{n - 2(k-1)}{2}$ as seen in (a).
    The transition graph is disconnected if and only if $n = 2k$ (i.e., the ordered matchings are perfect) as seen in (b) for $n=4$.
    The transition graph is connected in (c) so the edge labels on its Eulerian circuits are universal cycles for $\MATCHINGS[k]{\Kn{n}}$.
    }
    \label{fig:transitionMatching}
\end{figure}

\begin{theorem} \label{thm:uniMatchings}
A universal cycle for $\MATCHINGS[k]{\Kn{n}}$ exists except when $n \geq 4$ is even and $k = \frac{n}{2}$.
In other words, universal cycles for the ordered matchings of size $k$ in the complete graph exist except when they are perfect matchings with more than one edge.
\end{theorem}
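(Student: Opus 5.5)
We follow de Bruijn's approach and realize universal cycles of $\MATCHINGS[k]{\Kn{n}}$ as Eulerian circuits in a transition graph $T$. The nodes of $T$ are the ordered matchings $\MATCHINGS[k-1]{\Kn{n}}$. For every $L=[e_1,\ldots,e_k]\in\MATCHINGS[k]{\Kn{n}}$ we add a directed arc from $[e_1,\ldots,e_{k-1}]$ to $[e_2,\ldots,e_k]$, labeled by $e_k$. Arcs are then in bijection with $\MATCHINGS[k]{\Kn{n}}$, and an Eulerian circuit read via its labels is exactly a universal cycle. Conversely, a universal cycle yields an Eulerian circuit, since its consecutive windows of length $k$ overlap in $k-1$ edges. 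So the statement reduces to deciding when $T$ is Eulerian.

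\textbf{Degrees.} At a node $N=[e_1,\ldots,e_{k-1}]$, an outgoing arc corresponds to choosing an edge disjoint from the $2(k-1)$ covered vertices, and the same holds for incoming arcs. Hence both in-degree and out-degree equal $\binom{n-2(k-1)}{2}$, and $T$ is balanced. If $k=1$ there is a single node with loops, which trivially works. If $n<2k$ then $\MATCHINGS[k]{\Kn{n}}=\emptyset$ and $\epsilon$ suffices. It therefore remains to show that $T$ is connected (strong connectivity follows from balance plus weak connectivity) when $n\ge 2k+1$, and that $T$ is disconnected when $n=2k\ge4$.

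\textbf{Non-existence for $n=2k\ge4$.} Each node misses exactly two vertices, so the arc leaving $N$ is forced. It appends the unique missing pair $\{u,v\}$ and drops $e_1$. This means $T$ is a disjoint union of directed cycles, each of length $k$: the walk from $N$ cycles through rotations of the perfect matching $[e_1,\ldots,e_{k-1},\{u,v\}]$ and never changes the underlying unordered perfect matching. Since there are at least $2$ perfect matchings, or at least $(k-1)!$ distinct cyclic orders when $k\ge3$, $T$ has more than one component. Therefore no universal cycle exists.

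\textbf{Connectivity for $n\ge 2k+1$ (main obstacle).} Every node misses at least $3$ vertices. The plan is to show that every node reaches a canonical node $N_0=[\{0,1\},\{2,3\},\ldots]$. First, a walk of $k-1$ steps can replace the whole list by any ordered matching whose edges are each chosen disjoint from what is currently present. So the key move is a local \emph{substitution}: changing one edge $e_i=\{a,b\}$ into $\{a,c\}$ with $c$ free. Rotate the list, taking $k-1$ steps that re-append the same edges, until $e_i$ is at the front. Then append an edge $\{c,x\}$ or another rotation step, using the spare free vertices; the third free vertex is exactly what makes this possible. The aim is to realize transpositions of vertex labels, and also reorderings of the edges, because rotation changes the order cyclically and substitutions let us swap edges through a temporary free edge. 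Transpositions together with reorderings generate everything, so every node reaches $N_0$. The delicate point is checking that the temporary edge never collides, which is where $n\ge 2k+1$ is used. Once connectivity and balance are established, Euler's theorem gives an Eulerian circuit and hence the universal cycle.
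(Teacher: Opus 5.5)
Your setup, degree count, trivial cases, and the non-existence argument for $n=2k\ge 4$ are fine and match the paper. The gap is in the connectivity part, which is the whole content of the positive case.

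The moves you describe there are not arcs of $T$. An arc out of $[e_1,\ldots,e_{k-1}]$ must append an edge disjoint from \emph{all} of $e_1,\ldots,e_{k-1}$, including the edge $e_1$ being dropped. Your own count $\binom{n-2(k-1)}{2}$ says exactly this. Consequences:
\begin{itemize}
\item The rotation $[e_1,\ldots,e_{k-1}]\to[e_2,\ldots,e_{k-1},e_1]$, which ``re-appends the same edge'', does not exist; it would correspond to $[e_1,\ldots,e_{k-1},e_1]\notin\MATCHINGS[k]{\Kn{n}}$.
\item Once $e_i=\{a,b\}$ is at the front, you also cannot append $\{a,c\}$, because $a$ is still covered.
\item The follow-up step ``append an edge $\{c,x\}$ or another rotation step'' is unspecified. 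As written it would even occupy your target vertex $c$.
\end{itemize}
So no valid path realizing a substitution is exhibited. The ``delicate point'' you defer is precisely the missing argument.

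The missing idea is a buffer edge. Since $n\ge 2k+1$, at least three vertices are uncovered. Take the target $c$ and two further free vertices $y\ne z$, and first append $e_k=\{y,z\}$. The window of length $k-1$ can now slide around the cyclic list $e_1,\ldots,e_{k-1},e_k$: each $e_j$ is uncovered one step after it is dropped, so it can be re-appended then.
\begin{enumerate}
\item Slide until the step that drops $e_i$ (and appends $e_{i-1}$). At that point $a$, $b$ and $c$ are all free.
\item At the next step, append $\{a,c\}$ in place of $e_i$.
\item Continue appending $e_{i+1},\ldots,e_{k-1}$. The last of these steps drops the buffer $e_k$, and you land at $[e_1,\ldots,e_{i-1},\{a,c\},e_{i+1},\ldots,e_{k-1}]$.
\end{enumerate}
This single-vertex substitution is all you need. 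Moving one endpoint of one edge onto a free vertex of $\Kn{n}$ reaches every placement of the $k-1$ positioned edges, including reorderings, which you get by parking edges on free vertices. So the separate transposition and reordering machinery you propose is unnecessary. This is exactly the route the paper takes.
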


\begin{proof}
Let $G = \Kn{n} = (V, E)$ and consider the associated transition graph whose node set is $\MATCHINGS[k-1]{G}$. We first show that the transition graph is balanced.
Let $L = [e_1, e_2, \ldots, e_{k-1}] \in \MATCHINGS[k-1]{G}$ be an ordered matching and $M = \{e_1, e_2, \ldots, e_{k-1}\}$ be the associated matching.
Note that $[e_1, e_2, \ldots, e_{k-1}, e] \in \MATCHINGS[k]{G}$ if and only if $[e, e_1, e_2, \ldots, e_{k-1}] \in \MATCHINGS[k]{G}$ if and only if $e$'s endpoints are unmatched by $M$.
Thus the out-degree and in-degree of $L$ in the transition graph are both $\binom{n - 2(k-1)}{2}$.
Since the transition graph is balanced, a universal cycle exists if and only if it is (weakly) connected.
(Note that a balanced directed graph is strongly connected if and only if it is weakly connected.)

For the negative case, suppose that $n$ is even and $k = \frac{n}{2}$. 
Then each $L \in \MATCHINGS[k-1]{G}$ has in-degree and out-degree $\binom{n - 2(k-1)}{2} = \binom{2}{2} = 1$ in the transition graph.
Thus, the transition graph partitions into directed cycles  each of length $\frac{n}{2}$.
When $n \geq 4$ this partition includes more than one cycle, so the transition graph is disconnected and no universal cycle exists.

Otherwise, $k < \frac{n}{2}$.
We argue that there is a path from $L = [e_1, \ldots, e_i, \ldots, e_{k-1}] \in \MATCHINGS[k-1]{G}$ to $L' = [e_1, \ldots, e_i', \ldots, e_{k-1}] \in \MATCHINGS[k-1]{G}$ where $e_i = \{u,v\}$ and $e_i' = \{u,v'\}$ differ in a single vertex.
In other words, we can substitute vertex $v$ with vertex $v'$ in the ordered matching.
Note that this necessarily implies that $v'$ is unmatched in $L$ and $v$ is unmatched in $L'$.
Repeated substitutions prove that the transition graph is connected.

By our choice of $k$ there are at least three vertices that are unmatched by $L$ including $v'$.
Let $y \neq z$ be two such vertices distinct from $v'$, and let $e_k = \{y,z\}$.
The path is explicitly constructed below where each $\rightarrow$ denotes a single arc in the transition graph and $\rightarrow^*$ denotes a sequence of arcs.
We also keep track of the unmatched (pairs of) vertices in the left column. 
\begin{align*}
v',e_k  &&& [e_1, e_2, \ldots, e_i, \ldots, e_{k-1}] \rightarrow \\ 
v',e_1     &&& [e_2, \ldots, e_i, \ldots, e_{k-1}, e_k] \rightarrow^{*} \\
v',e_{i-1} &&& [e_i, \ldots, e_{k-1}, e_k, e_1, \ldots, e_{i-2}] \rightarrow \\
v',e_i  &&& [e_{i+1}, \ldots, e_{k-1}, e_k, e_1, \ldots, e_{i-1}] \rightarrow \\
v,e_{i+1} &&& [e_{i+2}, \ldots, e_{k-1}, e_k, e_1, \ldots, e_{i-1}, e_i'] \rightarrow^* \\
v,e_k  &&& [e_1, \ldots, e_{i-1}, e_i', e_{i+1}, \ldots, e_{k-1}] \qedhere
\end{align*}%
\end{proof}

\begin{figure}
    \centering
    \includegraphics[scale=1.1,page=13]{transitionGraphs.pdf}
    \caption{A substitution in the transition graph for $\MATCHINGS[4]{\Kn{9}}$.
    Note that the path substitutes vertex $\bbb{v = 2}$ with $\rrr{v' = 7}$.
    For convenience the unmatched vertices are shown below each node.
    }
    \label{fig:substitution}
\end{figure}

In our tworus construction we orient each edge in these universal cycles.
The following remark ensures that these orientations can be specified using words of length $m$.
For example, in Figures \ref{fig:blocks_B1}--\ref{fig:blocks_B2} the edges are oriented with repeated copies of $00$ and $01$, respectively.



\begin{remark} \label{rem:div}
The number of ordered maximum or near perfect matchings in the complete graph $\Kn{n}$ for odd $n = 2m+1$ is $|\MATCHINGS[m]{\Kn{n}}| = \frac{n!}{2^m}$ (\OEIS{A007019} \cite{oeisMain}) and is divisible by $m$.
\end{remark}

\section{Binary Multiversal Cycles}
\label{sec:binary}

Here we consider variations of binary de Bruijn sequences that contain each $n$-bit word $r$ times instead of once.
In particular, our results focus on $r = kn$ (i.e., the repetition is a multiple of the word length).
Figure \ref{fig:multiversalBinary3} has examples for $n=3$ and multiplicity $r=3,6$.
We refer to these sequences as \emph{multiversal cycles} in reference to the concept that was previously applied to shorthand permutations \cite{ruskey2010explicit}.
Multiversal cycles of binary and $q$-ary words have also been studied using the terms \emph{perfect necklaces} \cite{alvarez2016perfect,becher2024lyndon,fillmore2026existence} and \emph{cyclic multi de Bruijn sequences} \cite{tesler2017multi} albeit with different parameter names.

In Theorem \ref{thm:multiversalBinary} we prove that binary multiversal cycles can be constructed by a lexicographic concatenation reminiscent of the well-known FKM algorithm \cite{fredricksen1977lexicographic,fredricksen1978necklaces} or necklace-prefix algorithm \cite{dragon2016grandmama,dragon2018constructing} and its many generalizations \cite{sawada2016generalizing,dimuro2019classifying}.
This result was previously known \cite{alvarez2016perfect, fillmore2026existence}, but we include our own proof, which was discovered independently, as it forms a basis for our new results on multiversal cycles for unlabeled binary words in Section~\ref{sec:unlabeled}.

Section \ref{sec:binary_multiversal} defines multiversal cycles, then two lemmas in Section \ref{sec:binary_lemmas} lead to our result in Section \ref{sec:binary_construction}.
Unlabeled versions of these cycles are discussed in Section \ref{sec:unlabeled} and are used to orient the columns in our twori.


\begin{figure}
    \begin{subfigure}[t]{0.53\textwidth}
        \small\raggedright
        $\begin{aligned}
            S &= \rrr{0}\ggg{0}\bbb{0}\rrr{0}\ggg{0}\bbb{1}\rrr{0}\ggg{1}\bbb{0}\rrr{0}\ggg{1}\bbb{1}\rrr{1}\ggg{0}\bbb{0}\rrr{1}\ggg{0}\bbb{1}\rrr{1}\ggg{1}\bbb{0}\rrr{1}\ggg{1}\bbb{1} \\
            \\
            \wordSet[0]{S} &= \{ \rrr{0}\ggg{0}\bbb{0}, \rrr{0}\ggg{0}\bbb{1}, \rrr{0}\ggg{1}\bbb{0}, \rrr{0}\ggg{1}\bbb{1}, \rrr{1}\ggg{0}\bbb{0}, \rrr{1}\ggg{0}\bbb{1}, \rrr{1}\ggg{1}\bbb{0}, \rrr{1}\ggg{1}\bbb{1} \} = \BINARY{3} \\
            \wordSet[1]{S} &= \{ \ggg{0}\bbb{0}\rrr{0}, \ggg{0}\bbb{1}\rrr{0}, \ggg{1}\bbb{0}\rrr{0}, \ggg{1}\bbb{1}\rrr{1}, \ggg{0}\bbb{0}\rrr{1}, \ggg{0}\bbb{1}\rrr{1}, \ggg{1}\bbb{0}\rrr{1}, \ggg{1}\bbb{1}\rrr{0} \} = \BINARY{3} \\
            \wordSet[2]{S} &= \{ \bbb{0}\rrr{0}\ggg{0}, \bbb{1}\rrr{0}\ggg{1}, \bbb{0}\rrr{0}\ggg{1}, \bbb{1}\rrr{1}\ggg{0}, \bbb{0}\rrr{1}\ggg{0}, \bbb{1}\rrr{1}\ggg{1}, \bbb{0}\rrr{1}\ggg{1}, \bbb{1}\rrr{0}\ggg{0} \} = \BINARY{3} \\
            \\
            \\
            \\
        \end{aligned}$
        \caption{Multiversal cycle with $r = kn = 1 \cdot 3 = 3$ and
        $S = 000 \cdot 001 \cdot 010 \cdot 011 \cdot 100 \cdot 101 \cdot 110 \cdot 111$ by Theorem~\ref{thm:multiversalBinary}.
        }
        \label{fig:multiversalBinary3_r3}
    \end{subfigure}
    \hfill
    \begin{subfigure}[t]{0.46\textwidth} 
        \small\raggedright
        $\begin{aligned}
            S &= 
            \rrr{0}\ooo{0}\yyy{0}\ggg{0}\bbb{0}\vvv{0}\rrr{0}\ooo{0}\yyy{1}\ggg{0}\bbb{0}\vvv{1}\rrr{0}\ooo{1}
            \yyy{0}\ggg{0}\bbb{1}\vvv{0}\rrr{0}\ooo{1}\yyy{1}\ggg{0}\bbb{1}\vvv{1}\textcolor{gray}{\cdots}\\
            &\qquad\textcolor{gray}{\cdots}
            \rrr{1}\ooo{0}\yyy{0}\ggg{1}\bbb{0}\vvv{0}\rrr{1}\ooo{0}\yyy{1}\ggg{1}\bbb{0}\vvv{1}\rrr{1}
            \ooo{1}\yyy{0}\ggg{1}\bbb{1}\vvv{0}\rrr{1}\ooo{1}\yyy{1}\ggg{1}\bbb{1}\vvv{1} \\
            \wordSet[0]{S} &= \{ \rrr{0}\ooo{0}\yyy{0}, \rrr{0}\ooo{0}\yyy{1}, \rrr{0}\ooo{1}\yyy{0}, \rrr{0}\ooo{1}\yyy{1}, \rrr{1}\ooo{0}\yyy{0}, \rrr{1}\ooo{0}\yyy{1}, \rrr{1}\ooo{1}\yyy{0}, \rrr{1}\ooo{1}\yyy{1} \} \\
            \wordSet[1]{S} &= \{ \ooo{0}\yyy{0}\ggg{0}, \ooo{0}\yyy{1}\ggg{0}, \ooo{1}\yyy{0}\ggg{0}, \ooo{1}\yyy{1}\ggg{0}, \ooo{0}\yyy{0}\ggg{1}, \ooo{0}\yyy{1}\ggg{1}, \ooo{1}\yyy{0}\ggg{1}, \ooo{1}\yyy{1}\ggg{1} \} \\
            \wordSet[2]{S} &= \{ \yyy{0}\ggg{0}\bbb{0}, \yyy{1}\ggg{0}\bbb{0}, \yyy{0}\ggg{0}\bbb{1}, \yyy{1}\ggg{0}\bbb{1}, \yyy{0}\ggg{1}\bbb{0}, \yyy{1}\ggg{1}\bbb{0}, \yyy{0}\ggg{1}\bbb{1}, \yyy{1}\ggg{1}\bbb{1} \} \\
            \wordSet[3]{S} &= \{ \ggg{0}\bbb{0}\vvv{0}, \ggg{0}\bbb{0}\vvv{1}, \ggg{0}\bbb{1}\vvv{0}, \ggg{0}\bbb{1}\vvv{1}, \ggg{1}\bbb{0}\vvv{0}, \ggg{1}\bbb{0}\vvv{1}, \ggg{1}\bbb{1}\vvv{0}, \ggg{1}\bbb{1}\vvv{1} \} \\
            \wordSet[4]{S} &= \{ \bbb{0}\vvv{0}\rrr{0}, \bbb{0}\vvv{1}\rrr{0}, \bbb{1}\vvv{0}\rrr{0}, \bbb{1}\vvv{1}\rrr{1}, \bbb{0}\vvv{0}\rrr{1}, \bbb{0}\vvv{1}\rrr{1}, \bbb{1}\vvv{0}\rrr{1}, \bbb{1}\vvv{1}\rrr{0} \} \\
            \wordSet[5]{S} &= \{ \vvv{0}\rrr{0}\ooo{0}, \vvv{1}\rrr{0}\ooo{1}, \vvv{0}\rrr{0}\ooo{1}, \vvv{1}\rrr{1}\ooo{0}, \vvv{0}\rrr{1}\ooo{0}, \vvv{1}\rrr{1}\ooo{1}, \vvv{0}\rrr{1}\ooo{1}, \vvv{1}\rrr{0}\ooo{0} \}
        \end{aligned}$
        \caption{Multiversal cycle with $r = kn = 2 \cdot 3 = 6$ and
        $S = 000 \cdot 000 \cdot 001 \cdot 001 \cdots 111 \cdot 111$ by Theorem~\ref{thm:multiversalBinary}.
        }
        \label{fig:multiversalBinary3_r6}
    \end{subfigure}
    \caption{Multiversal cycles of $\mathcal{W} = \BINARY{n}$ for $n=3$ with multiplicity (or repetition) $r = kn$ for (a) $k=1$ and (b) $k=2$.
    The bits cycle through $r$ different colours and the words starting at a particular colour form $\BINARY{n}$.
    In particular, $\wordSet[0]{S}$ has the words starting with a \rrr{red} bit.
    Similarly, the individual word $000$ appears $r$ times, and each time it starts with a different colour.
    }
    \label{fig:multiversalBinary3}
\end{figure}

\subsection{Multiversal Cycles}
\label{sec:binary_multiversal}

Multiversal cycles with multiplicity $r=n$ first appeared in the context of shorthand universal cycles of $\PERMS{n}$ \cite{ruskey2010explicit}.
Here we define them for arbitrary sets of words and multiplicities.

\begin{definition} \label{def:multiversal}
Let $\mathcal{W}$ be a set of words of length $n$, and $l = |\mathcal{W}| - 1$. 
A sequence $S = s_0 s_1 \cdots s_{|S|-1}$ is called a \emph{multiversal cycle of $\mathcal{W}$ of multiplicity (or repetition) $r$} if $|S| = r \cdot |\mathcal{W}|$ and for all $0 \leq j < r$ we have
\begin{equation*}
\wordSet[n,j]{S} \walrus 
\begin{alignedat}[t]{6}
\{
    & s_j && \, s_{j+1} && {} \mathbin{\cdots} {}  && s_{j+n-1}, \\
    & s_{j+r} && \, s_{j+r+1} && \cdots  && s_{j+r+n-1}, \\ 
    & s_{j+2r} && \, s_{j+2r+1} && \cdots && s_{j+2r+n-1}, \\
    & \ldots, \\
    & s_{j+lr} && \, s_{j+lr+1} && \cdots && s_{j+lr+n-1} \} = \mathcal{W}.
\end{alignedat}
\end{equation*}
\end{definition}

Note that Definition \ref{def:multiversal} requires every word to appear exactly once at every index congruence class modulo its multiplicity (or repetition) $r$.
This can be seen by the colours in Figure \ref{fig:multiversalBinary3}.










\subsection{Lexicographic Order of Binary Words}
\label{sec:binary_lemmas}

Here we present two lemmas related to lexicographic order, or more specifically, lexicographic order with each word repeated some number of times.
Lemma \ref{lem:binarySame} considers subwords between two copies of the same word, while Lemma \ref{lem:binaryNext} considers subwords between consecutive words in lexicographic order.
Note that we view lexicographic order to be cyclic in the sense that the last word is followed by the first word.
A more general version of Lemma \ref{lem:binaryNext} was previously shown in \cite[Lemma 4]{alvarez2016perfect}, but we include our own proof as it provides a more explicit construction that forms a basis for multiversal cycles of unlabeled binary words in Section \ref{sec:unlabeled}.

\begin{lemma} \label{lem:binarySame}
Let $w = w_0 w_1 \cdots w_{n-1} \in \BINARY{n}$ and $i \in [n]$.
There exists $v = v_0 v_1 \cdots v_{n-1} \in \BINARY{n}$ such that $w = v_i v_{i+1} \cdots v_{n-1} v_0 v_1 \cdots v_{i-1}$.
\end{lemma}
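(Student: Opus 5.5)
The plan is to write $v$ down explicitly as the cyclic rotation of $w$ by $i$ positions in the opposite direction, and then check index by index that rotating $v$ starting at position $i$ returns $w$.

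Concretely, define $v_j \walrus w_{j-i}$ for every $j \in [n]$, with indices taken modulo $n$ as stated in the conventions of Section~\ref{sec:intro}. Equivalently, $v = w_{n-i} w_{n-i+1} \cdots w_{n-1} w_0 w_1 \cdots w_{n-i-1}$. This is a word of length $n$ over $\{0,1\}$, so $v \in \BINARY{n}$.

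To verify the claimed identity, compare the $t$-th symbol of $v_i v_{i+1} \cdots v_{n-1} v_0 \cdots v_{i-1}$ for $t \in [n]$. That symbol is $v_{i+t}$, with the index read modulo $n$. By definition $v_{i+t} = w_{(i+t)-i} = w_t$, so the rotated word agrees with $w$ in every position. No case split between $t < n-i$ and $t \ge n-i$ is needed, because all index arithmetic is modulo $n$.

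There is no real obstacle here. The only point needing care is the direction of the rotation: one must use $w_{j-i}$ rather than $w_{j+i}$ so that starting at position $i$ undoes the shift. The case $i=0$ gives $v=w$, which is consistent with this choice.
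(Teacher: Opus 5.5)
Your proof is correct and is the same as the paper's: the paper also takes $v = w_{n-i} w_{n-i+1} \cdots w_{n-1} w_0 w_1 \cdots w_{n-i-1}$, which is your $v_j = w_{j-i}$ with indices modulo $n$. Your index-by-index check $v_{i+t} = w_t$ fills in the verification the paper leaves implicit, and the modular formulation also covers the degenerate case $i=0$ without any fuss.
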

This means that every binary word $w \in \BINARY{n}$ is the subword starting at index $i$ within $vv$ for some $v \in \BINARY{n}$.
Alternatively, every binary word $w$ is the $i$th rotation of some word $v$.
\begin{proof}
Use $v = w_{n-i} w_{n-i+1} \cdots w_{n-1} w_0 w_1 \cdots w_{n-i-1} \in \BINARY{n}$.\!\!\!\! 
\end{proof}

\begin{lemma} \label{lem:binaryNext}
Let $w = w_0 w_1 \cdots w_{n-1} \in \BINARY{n}$ and $i \in [n]$.
There exist consecutive $u = u_0 u_1 \cdots u_{n-1}$ and $v = v_0 v_1 \cdots v_{n-1}$ in the lexicographic order of $\BINARY{n}$ such that $w = u_i u_{i+1} \cdots u_{n-1} v_0 v_1 \cdots v_{i-1}$.
\end{lemma}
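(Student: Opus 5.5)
We need $u$ and its lexicographic successor $v$ (cyclically, so $v = 0^n$ when $u = 1^n$) such that $u$ ends with $w_0 \cdots w_{n-i-1}$ and $v$ begins with $w_{n-i} \cdots w_{n-1}$. The plan is to choose $v$ first, with prescribed prefix, and then take $u$ to be its predecessor. The case $i=0$ is trivial: take $u = w$, with $v$ its successor, and the condition involves only $u$. So assume $1 \le i \le n-1$. Write $p = w_0 \cdots w_{n-i-1}$ (length $n-i$) for the required suffix of $u$, and $s = w_{n-i} \cdots w_{n-1}$ (length $i$) for the required prefix of $v$.

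Recall how the predecessor works. If $v = x\,1\,0^t$, its predecessor is $u = x\,0\,1^t$; the predecessor of $0^n$ is $1^n$. So $u$ and $v$ agree except on a final block: the last $1$ of $v$ and the trailing $0$s after it. Set $v = s\,y$ with $y$ of length $n-i$ still to be chosen, and split into cases on the last bit of $p$.

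\emph{Case $p$ ends in $0$.} The last bit of $u$ is $0$. Since the predecessor of $v = x10^t$ is $x01^t$, the only way $u$ ends in $0$ is $t = 0$, i.e.\ $v$ ends in $1$ and $u$ is $v$ with its last bit flipped to $0$. Then $u = s\,y'$, where $y'$ is $y$ with its last bit set to $0$. We need $y' = p$, so take $y = p$ with its last bit changed to $1$. This choice also works when $i$ is large, since $y$ has length $n-i \ge 1$.

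\emph{Case $p$ ends in $1$.} Write $p = q\,0\,1^t$ with $t \ge 1$, or $p = 1^{n-i}$.
\begin{itemize}
\item If $p = q01^t$, take $y = q10^t$. Then $v = s\,q\,1\,0^t$, and its predecessor is $u = s\,q\,0\,1^t = s\,p$, which has suffix $p$.
\item If $p = 1^{n-i}$, we need $u$ to end in $1^{n-i}$ and $v$ to start with $s$. Take $u = z\,0\,1^{n-1-|z|}$ with successor $v = z\,1\,0^{n-1-|z|}$. If $s$ contains a $1$, write $s = z\,1\,0^{r}$; this makes the trailing run of $u$ have length $n-1-|z| = r + (n-i) \ge n-i$, and the prefix of $v$ of length $i$ is exactly $s$.
\item If $s = 0^i$, take $u = 1^n$ and $v = 0^n$ using the cyclic wrap, which works.
\end{itemize}

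The main obstacle is getting the case analysis on carries exactly right: the run of trailing $1$s in $p$ may extend into the region fixed by $s$, and the all-ones wraparound must be handled. Everything else is direct verification.
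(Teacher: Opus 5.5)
Your proof is correct and is essentially the paper's argument. The paper also fixes the suffix $\alpha = w_0\cdots w_{n-i-1}$ of $u$ and the prefix $\beta = w_{n-i}\cdots w_{n-1}$ of $v$, and splits into the same cases: the wrap-around $\alpha = 1^{n-i}$, $\beta = 0^i$; the case $\alpha = 1^{n-i}$ with a $1$ in $\beta$ (your $s = z10^r$ construction); and $\alpha \neq 1^{n-i}$, which merges your ``$p$ ends in $0$'' case and your $p = q01^t$ subcase into the single choice $v = \beta\, w_0\cdots w_{j-1}10^{n-i-j-1}$, where $j$ is the position of the last $0$ in $\alpha$ (your separate $i=0$ case is likewise absorbed there, since $\beta$ is empty).
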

This means that every binary word is the subword starting at index $i$ within $uv$ for $u,v \in \BINARY{n}$ that are consecutive in lexicographic order.
\begin{proof} 
Let $\rrr{w} = \rrr{w_0 w_1 \cdots w_{n-1}} \in \BINARY{n}$ and $i \in [n]$ be arbitrary. 
We set the suffix of $\ggg{u}$ and prefix of $\bbb{v}$ as follows.
\begin{tightitemize}
\item $\ggg{u_i u_{i+1} \cdots u_{n-1}} = \rrr{w_0 w_1 \cdots w_{n-i-1}}$,
\item $\bbb{v_0 v_1 \cdots v_{i-1}} = \rrr{w_{n-i} w_{n-i+1} \cdots w_{n-1}}$.
\end{tightitemize}
This ensures that $\rrr{w} = \rrr{w_0 w_1 \cdots w_{n-1}}$ is the substring $\ggg{u_i u_{i+1} \cdots u_{n-1}} \bbb{v_0 v_1 \cdots v_{i-1}}$ within $uv$. 
It remains to set the unspecified prefix of $u$ and suffix of $v$ to ensure they are consecutive in cyclic lexicographic order. 
For convenience we write $\rrr{w} = \rrr{\alpha \beta}$ where $\rrr{\alpha} = \rrr{w_0 w_1 \cdots w_{n-i-1}}$ and $\rrr{\beta} = \rrr{w_{n-i} w_{n-i+1} \cdots w_{n-1}}$, with $a \walrus |\alpha| = n-i$ and $b \walrus |\beta| = i$.
We proceed with cases.
\smallskip

\proofcase{1}
$\rrr{\alpha} = \rrr{1^a}$ and $\rrr{\beta} = \rrr{0^b}$.\\
In this case we set the prefix of $\ggg{u}$ and suffix of $\bbb{v}$ as 
\begin{tightitemize}
\item $\ggg{u_0 u_1 \cdots u_{i-1}} = \ggg{1^{i}}$, 
\item $\bbb{v_i v_{i+1} \cdots v_{n-1}} = \bbb{0^{n-i}}$.
\end{tightitemize}
This ensures that $\ggg{u} = \ggg{1^n}$ is the last word in lexicographic order and $\bbb{v} = \bbb{0^n}$ is the first word in lexicographic order as visualized below.

\smallskip
$\begin{array}{@{}r@{\,}*{8}{@{\,}c@{\,}}}
\ggg{u} =& \ggg{1} & \ggg{1} & \ggg{\cdots} & \ggg{1} & \rrr{1} & \rrr{1} & \rrr{\cdots} & \rrr{1} \\
\bbb{v} =& \rrr{0} & \rrr{0} & \rrr{\cdots} & \rrr{0} & \bbb{0} & \bbb{0} & \bbb{\cdots} & \bbb{0}
\end{array}$

\smallskip
\proofcase{2}
$\rrr{\alpha} = \rrr{1^a}$ and $\rrr{\beta} \neq 0^b$. \\
Let $j \geq n-i $ be the maximum index with $\rrr{w_j} = \rrr{1}$.
This means $\rrr{\beta} = \rrr{w_{n-i} w_{n-i+1} \cdots w_{j-1} 1 0^{n-j-1}}$.
Then we define the prefix of $\ggg{u}$ and the suffix of $\bbb{v}$ as follows.
\begin{tightitemize}
\item $\ggg{u_0 u_1 \cdots u_{i-1}} = \ggg{w_{n-i} w_{n-i+1} \cdots w_{j-1} 0 1^{n-j-1}}$,
\item $\bbb{v_i v_{i+1} \cdots v_{n-1}} = \bbb{0^{a}} = \flip{\bbb{\alpha}}$.
\end{tightitemize}
This ensures that $\ggg{u}$ is followed by $\bbb{v}$ in lexicographic order as visualized below.

\smallskip
$\begin{array}{@{}r@{\,}*{13}{@{\,}c@{\,}}}
\ggg{u} =& \ggg{w_{n-i}} & \ggg{w_{n-i+1}} & \ggg{\cdots} & \ggg{w_{j-1}} & \ggg{0} & \ggg{1} & \ggg{1} & \ggg{\cdots} & \ggg{1} & \rrr{1} & \rrr{1} & \rrr{\cdots} & \rrr{1} \\
\bbb{v} =& \rrr{w_{n-i}} & \rrr{w_{n-i+1}} & \rrr{\cdots} & \rrr{w_{j-1}} & \rrr{1} & \rrr{0} & \rrr{0} & \rrr{\cdots} & \rrr{0} & \bbb{0} & \bbb{0} & \bbb{\cdots} & \bbb{0}
\end{array}$

\smallskip

\proofcase{3}
$\rrr{\alpha} \neq 1^a$.\\
Let $j < n-i$ be the maximum index with $\rrr{w_j}=\rrr{0}$.
That means $\rrr{\alpha} = \rrr{w_0 w_1 \cdots w_{j-1} 0 1^{a-j-1}}$.
Then we complete $\ggg{u}$ and~$\bbb{v}$ as follows.
\begin{tightitemize}
\item $\ggg{u_0 u_1 \cdots u_{i-1}} = \ggg{w_{n-i} w_{n-i+1} \cdots w_{n-1}} = \ggg{\beta}$.
\item $\bbb{v_i v_{i+1} \cdots v_{n-1}} = \bbb{w_0 w_1 \cdots w_{j-1} 1 0^{a-j-1}}$.
\end{tightitemize}
This ensures that $\ggg{u}$ is followed by $\bbb{v}$ in lexicographic order as visualized below.

\smallskip
$\begin{array}{@{}r@{\,}*{12}{@{\,}c@{\,}}}
\ggg{u} =& \ggg{w_{n-i}} & \ggg{w_{n-i+1}} & \ggg{\cdots} & \ggg{w_{n-1}} & \rrr{w_0} & \rrr{w_1} & \rrr{\cdots} & \rrr{w_{j-1}} & \rrr{0} & \rrr{1} & \rrr{\cdots} & \rrr{1} \\
\bbb{v} =& \rrr{w_{n-i}} & \rrr{w_{n-i+1}} & \rrr{\cdots} & \rrr{w_{n-1}} & \bbb{w_0} & \bbb{w_1} & \bbb{\cdots} & \bbb{w_{j-1}} & \bbb{1} & \bbb{0} & \bbb{\cdots} & \bbb{0}
\end{array}$ \qedhere
\end{proof}

\subsection{Construction via Lexicographic Order}
\label{sec:binary_construction}

Now we prove that multiversal cycles of $\BINARY{n}$ with multiplicity $r=kn$ can be constructed by concatenating (multiple) copies of each word in $\BINARY{n}$ in lexicographic order.
This result was previously proven for $k=1$ (or equivalently, $r=n$) in \cite{alvarez2016perfect} and $k \geq 1$ in \cite{fillmore2026existence}.
Figure \ref{fig:multiversalBinary3_r3} and \ref{fig:multiversalBinary3_r6} illustrate the concatenation where each word is repeated $k=1$ and $k=2$ times, respectively.

\begin{theorem}[\cite{fillmore2026existence}, Proposition 4.1] \label{thm:multiversalBinary}
Let the lexicographic order of $\BINARY{n}$ be $\alpha_0, \alpha_1, \ldots, \alpha_{N-1}$ with $N = 2^n$.
The sequence $\alpha_0^k \alpha_1^k \cdots \alpha_{N-1}^k$ is a multiversal cycle of $\BINARY{n}$ with multiplicity $r=kn$ for any $k \geq 0$. 
\end{theorem}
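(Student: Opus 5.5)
The plan is to verify the definition directly by counting and then reducing to Lemmas \ref{lem:binarySame} and \ref{lem:binaryNext}.

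Let $S = \alpha_0^k \alpha_1^k \cdots \alpha_{N-1}^k$. Then $|S| = kn \cdot N = r \cdot |\BINARY{n}|$, as the definition requires. (The degenerate case $k=0$ gives $r=0$ and the empty sequence, and we treat it as vacuous.) Fix a residue $j$ with $0 \le j < r$ and write $j = qn + i$ with $0 \le q < k$ and $i \in [n]$. The windows in $\wordSet[n,j]{S}$ start at positions $j + tkn$ for $t = 0,1,\ldots,N-1$. Each such window begins at offset $i$ inside the $(q{+}1)$st copy of $\alpha_t$ within the block $\alpha_t^k$. Since there are exactly $N$ windows, it suffices to show that every $w \in \BINARY{n}$ occurs among them, because surjectivity then forces the windows to be distinct.

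Two cases arise from the position of the window.

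\textbf{Case $q < k-1$, or $i = 0$.} If $i=0$, the window is exactly $\alpha_t$. Otherwise the window lies inside $\alpha_t\alpha_t$, so it equals the $i$th rotation $(\alpha_t)_i \cdots (\alpha_t)_{n-1} (\alpha_t)_0 \cdots (\alpha_t)_{i-1}$. By Lemma \ref{lem:binarySame}, every $w$ is the $i$th rotation of some $v$. As $t$ ranges over all indices, $\alpha_t$ ranges over all of $\BINARY{n}$, so every $w$ appears.

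\textbf{Case $q = k-1$ and $i > 0$.} The window straddles the last copy of $\alpha_t$ and the first copy of $\alpha_{t+1}$, with indices taken cyclically so that $\alpha_{N-1}$ is followed by $\alpha_0$. The window is therefore $u_i \cdots u_{n-1} v_0 \cdots v_{i-1}$ with $u = \alpha_t$ and $v = \alpha_{t+1}$ consecutive in cyclic lexicographic order. By Lemma \ref{lem:binaryNext}, every $w$ arises in this way for some consecutive pair. Each consecutive pair corresponds to a unique $t$, so again every $w$ appears. (When $k=1$, every nonzero $i$ falls into this case.)

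The only real obstacle is bookkeeping. We must check that the index arithmetic places each window in exactly one of the two cases, and that the cyclic wraparound from $\alpha_{N-1}^k$ to $\alpha_0^k$ matches the cyclic convention used in Lemma \ref{lem:binaryNext}. Once this is confirmed, the theorem follows immediately from the two lemmas together with the counting argument.
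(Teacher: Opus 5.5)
Your proof is correct and takes essentially the same route as the paper. Your case split ($q<k-1$ or $i=0$, versus $q=k-1$ and $i>0$) is exactly the paper's split $0\le j\le (k-1)n$ versus $(k-1)n<j<kn$: windows inside $\alpha_t\alpha_t$ are handled by Lemma~\ref{lem:binarySame}, and windows straddling $\alpha_t\alpha_{t+1}$ (cyclically) by Lemma~\ref{lem:binaryNext}, while your counting remark that $N$ windows covering all of $\BINARY{n}$ forces distinctness only makes explicit what the paper leaves implicit.
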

\begin{proof}
Let $k \geq 0$ be arbitrary and $S \walrus \alpha_0^k \alpha_1^k \cdots \alpha_{N-1}^k$.
We have that $|S| = k \cdot n \cdot |\BINARY{n}|$, so it remains to show that $\wordSet[n, j]{S} = \BINARY{n}$ for all $0 \leq j < r = nk$.

If $0 \leq j \leq (k-1) \cdot n$ then the elements of $\wordSet[n,j]{S}$ are exactly the subwords starting at position $j$ within $\alpha_{t} \alpha_{t}$ for $t \in [N]$. So the result follows by Lemma \ref{lem:binarySame} for $i = j \pmod n$.

If $(k-1)\cdot n < j < kn$ then $\wordSet[n,j]{S}$ is the set of substrings starting at position $j$ within $\alpha_t \alpha_{t+1}$ for $t \in [N]$. So Lemma \ref{lem:binaryNext} for $i = j \pmod n$ gives the desired result.
\end{proof}

It is worth noting that orders other than lexicographic order may not work with Theorem~\ref{thm:multiversalBinary}.
For example, consider the binary reflected Gray code $00, 01, 11, 10$ for $\BINARY{n}$ with $n=2$ \cite{mutze2023combinatorial}.
Concatenating these words gives $S = s_0 s_1 \cdots s_7 = \rrr{0}\bbb{0}\rrr{0}\bbb{1}\rrr{1}\bbb{1}\rrr{1}\bbb{0}$ which is not a multiversal cycle with multiplicity $m=n=2$.
In particular, $\wordSet[2,1]{S} = \{\bbb{0}\rrr{0}, \bbb{1}\rrr{1}, \bbb{1}\rrr{1}, \bbb{0}\rrr{0}\} \neq \BINARY{2}$.

\section{Unlabeled Binary Multiversal Cycles}
\label{sec:unlabeled}

In this section we construct multiversal cycles of unlabeled binary words.
This means that every binary word $w$ is viewed as being equivalent to its complement $\flip{w}$.
Somewhat informally we let $\UNLABELED{n}$ denote any set containing exactly one representative of each of the $2^{n-1}$ such pairs.
In other words, $\UNLABELED{n}$ is any maximal complement-free subset of $\BINARY{n}$.
One particular representation of $\UNLABELED{n}$ is the set of binary words whose first bit is $0$ which we denote by $\BINARYzero{n} \walrus \{b_0 \cdots b_{n-1} \in \BINARY{n} \colon b_0 = 0\}$.
For example, $\UNLABELED{2}$ can be represented as 
$\{01, 11\}$ or $\{10,11\}$ or $\BINARY[0]{2} = \{00,01\}$.
Examples of multiversal cycles of unlabeled binary words for $n=3$ with multiplicity $r=3$ and $r=6$ appear in Figure \ref{fig:multiversalUnlabeled3}.
Lemmas analogous to those in Section \ref{sec:binary_lemmas} are proven in \ref{sec:unlabeled_lemmas}, and then our construction appears in Section \ref{sec:unlabeled_construction}.

\begin{figure}
    \begin{subfigure}{0.445\textwidth}
        \raggedright
        $\begin{aligned}
            S &= \rrr{0}\ggg{0}\bbb{0}\rrr{0}\ggg{0}\bbb{1}\rrr{0}\ggg{1}\bbb{0}\rrr{0}\ggg{1}\bbb{1} \\
            \wordSet[0]{S} &= \{ \rrr{0}\ggg{0}\bbb{0}, \rrr{0}\ggg{0}\bbb{1}, \rrr{0}\ggg{1}\bbb{0}, \rrr{0}\ggg{1}\bbb{1} \} = \UNLABELED{3} \\
            \wordSet[1]{S} &= \{ \ggg{0}\bbb{0}\rrr{0}, \ggg{0}\bbb{1}\rrr{0}, \ggg{1}\bbb{0}\rrr{0}, \ggg{1}\bbb{1}\rrr{0} \} = \UNLABELED{3} \\
            \wordSet[2]{S} &= \{ \bbb{0}\rrr{0}\ggg{0}, \bbb{1}\rrr{0}\ggg{1}, \bbb{0}\rrr{0}\ggg{1}, \bbb{1}\rrr{0}\ggg{0} \} = \UNLABELED{3} \\
            \\
            \\
            \\
        \end{aligned}$
        \caption{Multiversal cycle with $r=kn=1\cdot3=3$ and $S = 000 \cdot 001 \cdot 010 \cdot 011$ via Theorem~\ref{thm:multiversalUnlabeled}.}
        \label{fig:multiversalUnlabeled3_r3}
    \end{subfigure}
    \hfill
    \begin{subfigure}{0.54\textwidth}
        \raggedright
        $\begin{aligned}
            S &= \rrr{0}\ooo{0}\yyy{0}\ggg{0}\bbb{0}\vvv{0}\rrr{0}\ooo{0}\yyy{1}\ggg{0}\bbb{0}\vvv{1}\rrr{0}\ooo{1}\yyy{0}\ggg{0}\bbb{1}\vvv{0}\rrr{0}\ooo{1}\yyy{1}\ggg{0}\bbb{1}\vvv{1} \\
            \wordSet[0]{S} &= \{ \rrr{0}\ooo{0}\yyy{0}, \rrr{0}\ooo{0}\yyy{1}, \rrr{0}\ooo{1}\yyy{0}, \rrr{0}\ooo{1}\yyy{1} \} = \UNLABELED{3} \\
            \wordSet[1]{S} &= \{ \ooo{0}\yyy{0}\ggg{0}, \ooo{0}\yyy{1}\ggg{0}, \ooo{1}\yyy{0}\ggg{0}, \ooo{1}\yyy{1}\ggg{0} \} = \UNLABELED{3} \\
            \wordSet[2]{S} &= \{ \yyy{0}\ggg{0}\bbb{0}, \yyy{1}\ggg{0}\bbb{0}, \yyy{0}\ggg{0}\bbb{1}, \yyy{1}\ggg{0}\bbb{1} \} = \UNLABELED{3} \\
            \wordSet[3]{S} &= \{ \ggg{0}\bbb{0}\vvv{0}, \ggg{0}\bbb{0}\vvv{1}, \ggg{0}\bbb{1}\vvv{0}, \ggg{0}\bbb{1}\vvv{1} \} = \UNLABELED{3} \\
            \wordSet[4]{S} &= \{ \bbb{0}\vvv{0}\rrr{0}, \bbb{0}\vvv{1}\rrr{0}, \bbb{1}\vvv{0}\rrr{0}, \bbb{1}\vvv{1}\rrr{0} \} = \UNLABELED{3} \\
            \wordSet[5]{S} &= \{ \vvv{0}\rrr{0}\ooo{0}, \vvv{1}\rrr{0}\ooo{1}, \vvv{0}\rrr{0}\ooo{1}, \vvv{1}\rrr{0}\ooo{0} \} = \UNLABELED{3}
        \end{aligned}$
        \caption{Multiversal cycle with $r=kn=2\cdot3=6$ and $S = 000 \cdot 000 \cdot 001 \cdot 001 \cdot 010 \cdot 010 \cdot 011 \cdot 011$ via Theorem~\ref{thm:multiversalUnlabeled}.}
        \label{fig:multiversalUnlabeled3_r6}
    \end{subfigure}
    \caption{Multiversal cycles of $\UNLABELED{n}$ for $n=3$ with multiplicity $r$.
    Here the sets are identical in an unlabeled sense.
    For example, in (a) the pair $011 = \overline{100}$ appears as $\rrr{0}\ggg{1}\bbb{1} \in \wordSet[0]{S}$ and $\ggg{1}\bbb{0}\rrr{0} \in \wordSet[1]{S}$.
    }
    \label{fig:multiversalUnlabeled3}
\end{figure}

\subsection[Lexicographic Order of Binary Words starting with 0]{Lexicographic Order of Binary Words starting with $0$}
\label{sec:unlabeled_lemmas}

In the following, we present two lemmas analogous to Lemma \ref{lem:binarySame} and \ref{lem:binaryNext}, but for the set $\BINARYzero{n}$.
As before, we consider the lexicographic order on $\BINARYzero{n}$ to be cyclic, that means the last word $01^{n-1}$ is followed by the first word $0^n$ of $\BINARYzero{n}$.
To ease notation, for binary words $w, v \in \BINARY{n}$ we write $w \equiv v$ to denote that either $w = v$ or $w = \flip{v}$.

\begin{lemma} \label{lem:unlabeledSame}
    Let $i \in [n]$ be arbitrary. For every $w \in \BINARY{n}$ there exists some $v = v_0 v_1 \cdots v_{n-1} \in \BINARYzero{n}$ such that $w \equiv v_i \cdots v_{n-1} v_0 v_1 \cdots v_{i-1}$. 
\end{lemma}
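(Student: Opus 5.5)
The plan is to reduce directly to Lemma~\ref{lem:binarySame} and then fix the first bit by complementation. Fix $i \in [n]$ and $w \in \BINARY{n}$.

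First, apply Lemma~\ref{lem:binarySame} to $w$ and $i$. This gives some $v' = v'_0 v'_1 \cdots v'_{n-1} \in \BINARY{n}$ with $w = v'_i \cdots v'_{n-1} v'_0 \cdots v'_{i-1}$. Concretely, $v' = w_{n-i} \cdots w_{n-1} w_0 \cdots w_{n-i-1}$. The only defect is that $v'$ need not lie in $\BINARYzero{n}$, that is, we may have $v'_0 = 1$.

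Second, split on the value of $v'_0$.
\begin{itemize}
\item If $v'_0 = 0$, set $v = v'$. Then $v \in \BINARYzero{n}$ and the rotation equals $w$ exactly.
\item If $v'_0 = 1$, set $v = \flip{v'}$, so $v_0 = 0$ and $v \in \BINARYzero{n}$. Complementation acts bitwise, so it commutes with rotation. Hence $v_i \cdots v_{n-1} v_0 \cdots v_{i-1} = \flip{v'_i \cdots v'_{n-1} v'_0 \cdots v'_{i-1}} = \flip{w}$.
\end{itemize}
In both cases the $i$th rotation of $v$ is $\equiv w$, which is what the statement asks for.

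There is no real obstacle here. The only point to state explicitly is that complementing commutes with taking the cyclic rotation, since it acts position by position. Note also that $v'_0 = w_{n-i}$ (indices mod $n$, so $v'_0 = w_0$ when $i = 0$). So the case split can be phrased directly as whether $w_{n-i}$ is $0$ or $1$.
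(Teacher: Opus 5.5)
Your proof is correct and matches the paper's: the paper also splits on $w_{n-i}$ and takes $v$ to be the appropriate rotation of $w$ (when $w_{n-i}=0$) or of $\flip{w}$ (when $w_{n-i}=1$), which is exactly your $v'$ or $\flip{v'}$. Deriving this from Lemma~\ref{lem:binarySame} plus the fact that complementation commutes with rotation is a slightly cleaner presentation. It also avoids the index slip in the paper's explicit formula for the complemented case, which ends at $\flip{w}_{i-1}$ where $\flip{w}_{n-i-1}$ is meant.
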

This means that for every binary word $w$ either $w$ itself or its complement $\flip{w}$ is the subword starting at index $i$ within $v v$ for some $v \in \BINARYzero{n}$.
\begin{proof}
    Let $i \in [n]$ and $w \in \BINARY{n}$ be arbitrary. 
	If $w_{n-i} = 0$, then $w$ itself can be found as a subword of  $v v$ starting at position $i$ for $v = w_{n-i} w_{n-i + 1} \cdots w_{n-1} w_0 w_1 \cdots w_{n-i-1}$.
	If $w_{n-i} = 1$, then $w$ itself cannot be found as a subword starting at position $i$ in any $vv$, since every $v\in \BINARYzero{n}$ has $0$ as the first bit.
    But for $v = \flip{w}_{n-i} \flip{w}_{n-i + 1} \cdots \flip{w}_{n-1} \flip{w}_0 \flip{w}_1 \cdots \flip{w}_{i-1}$, the complement $\flip{w}$ is a subword of $vv$ starting at position $i$, so $w \equiv v_i \cdots v_{n-1} v_0 v_1 \cdots v_{i-1}$. 
\end{proof}

\begin{lemma}\label{lem:unlabeledNext}
    Let $i \in [n]$ be arbitrary. 
	For every $w \in \BINARY{n}$ there exist consecutive \mbox{$u = u_0 u_1 \cdots u_{n-1}$} and $v = v_0 v_1 \cdots v_{n-1}$ in the lexicographic order of $\BINARYzero{n}$ such that \mbox{$w \equiv u_i u_{i+1} \cdots u_{n-1} v_{0} v_1 \cdots v_{i-1}$}. 
\end{lemma}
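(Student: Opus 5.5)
We reduce to Lemma~\ref{lem:binaryNext} in dimension $n-1$. The reduction uses the fact that the cyclic lexicographic order of $\BINARYzero{n}$ is the cyclic lexicographic order of $\BINARY{n-1}$ with a $0$ prefixed to every word. Concretely, $u = 0u'$ and $v = 0v'$ are consecutive in the cyclic order of $\BINARYzero{n}$ exactly when $u', v' \in \BINARY{n-1}$ are consecutive in the cyclic order of $\BINARY{n-1}$. This includes the wrap-around, where $01^{n-1}$ is followed by $0^n$ just as $1^{n-1}$ is followed by $0^{n-1}$.

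First take $i=0$. The required subword is $u$ itself, so we let $u \in \BINARYzero{n}$ be whichever of $w, \flip{w}$ starts with $0$, and let $v$ be its successor. From now on assume $1 \le i \le n-1$.

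The subword $u_i \cdots u_{n-1} v_0 \cdots v_{i-1}$ has its bit at position $n-i$ equal to $v_0 = 0$. Replacing $w$ by $\flip{w}$ if necessary (which is allowed by $\equiv$), we may assume $w_{n-i} = 0$. Define
\[ w' = w_0 \cdots w_{n-i-1}\, w_{n-i+1} \cdots w_{n-1} \in \BINARY{n-1}, \]
obtained by deleting the bit $w_{n-i}$. Apply Lemma~\ref{lem:binaryNext} to $w'$ with length $n-1$ and index $i-1 \in [n-1]$. This gives consecutive $u', v'$ in the lexicographic order of $\BINARY{n-1}$ with
\[ w' = u'_{i-1} \cdots u'_{n-2}\, v'_0 \cdots v'_{i-2}. \]
Set $u = 0u'$ and $v = 0v'$, which are consecutive in $\BINARYzero{n}$ by the observation above. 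Since $u_{t} = u'_{t-1}$ and $v_t = v'_{t-1}$ for $t \ge 1$, we get
\[ u_i \cdots u_{n-1}\, v_0 v_1 \cdots v_{i-1} = u'_{i-1} \cdots u'_{n-2}\; 0\; v'_0 \cdots v'_{i-2}. \]
The segment $u'_{i-1} \cdots u'_{n-2}$ has length $n-i$ and equals $w_0 \cdots w_{n-i-1}$. The segment $v'_0 \cdots v'_{i-2}$ equals $w_{n-i+1} \cdots w_{n-1}$. So the whole word is $w$ with its bit $w_{n-i} = 0$ restored in place, giving $w \equiv u_i \cdots u_{n-1} v_0 \cdots v_{i-1}$.

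The main point needing care is the index bookkeeping. The deleted bit must sit exactly where $v_0$ lands, which is position $n-i$ of the window. Also, the shifted index $i-1$ must lie in $[n-1]$, which holds because $i \ge 1$; this is why $i=0$ is handled separately. The degenerate case $n=1$ is covered by the $i=0$ argument.
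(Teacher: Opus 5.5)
Your proof is correct, and it takes a different route from the paper's.

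The paper stays at length $n$. It applies Lemma~\ref{lem:binaryNext} directly to $w$ (if $w_{n-i}=0$) or to $\flip{w}$ (if $w_{n-i}=1$). It then argues that the resulting consecutive pair $u,v$ of $\BINARY{n}$ already lies in $\BINARYzero{n}$, because $v_0=0$ and $u$ precedes $v$. That inference fails exactly when the lemma returns the wrap-around pair $(1^n,0^n)$, which happens for $w\in\{1^{n-i}0^i,0^{n-i}1^i\}$. The paper therefore treats these two words as a separate case, using the wrap-around $(01^{n-1},0^n)$ of $\BINARYzero{n}$.

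You instead delete the bit that has to sit on $v_0$ and apply Lemma~\ref{lem:binaryNext} at length $n-1$ with index $i-1$. You then transport the result through the order isomorphism $x\mapsto 0x$ from $\BINARY{n-1}$ onto $\BINARYzero{n}$. This isomorphism carries the wrap-around $1^{n-1}\to 0^{n-1}$ to $01^{n-1}\to 0^n$, so no exceptional words arise. It also makes explicit that $\BINARYzero{n}$ in lexicographic order is simply a copy of $\BINARY{n-1}$. The paper's version has the modest advantage that, away from the exceptional words, the unlabeled witnesses are literally the labeled witnesses for $w$ or $\flip{w}$ at the same length.

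Your separate treatment of $i=0$ is more than cosmetic. For $i=0$ the window is $u$ itself, so the paper's Case~1 choice $u=01^{n-1}$ does not work for $w\in\{0^n,1^n\}$ once $n\ge 2$. Its Cases~2--3 rely on $v_0=w_{n-i}$, which fails for $i=0$: for $w=01^{n-1}$ the pair produced is $(01^{n-1},10^{n-1})$, which leaves $\BINARYzero{n}$. Your choice of $u\equiv w$ together with its successor handles this case correctly.
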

That means for every binary word $w$ either $w$ itself or its complement $\flip{w}$ is the subword starting at index $i$ within $u v$ for some $u, v \in \BINARYzero{n}$ that are consecutive in the cyclic lexicographic order.
\begin{proof}
    Let $i \in [n]$ and $w \in \BINARY{n}$ be arbitrary. 
	We distinguish 3 cases.

    \medskip
	
	\proofcase{1} $\rrr{w} = \rrr{1^{n-i}0 ^{i}}$ or $\rrr{w} = \rrr{0^{n-i}1^i}$.\\
	Then $\ggg{u} = \ggg{01^{n-1}}$ and $\bbb{v} = \bbb{0^{n}}$ satisfy that $\rrr{w} \equiv \ggg{u_{i} u_{i+1} \cdots u_{n-1}} \bbb{v_0 v_1 \cdots v_{i-1}}$ as illustrated below. 

    \smallskip
	$\begin{array}{@{}r@{\,}*{7}{@{\,}c@{\,}}}
		\ggg{u} =& \ggg{0} & \ggg{1} & \ggg{\cdots} & \ggg{1} & \rrr{1} & \rrr{\cdots} & \rrr{1} \\
		\bbb{v} =& \rrr{0} & \rrr{0} & \rrr{\cdots} & \rrr{0} & \bbb{0} & \bbb{\cdots} & \bbb{0}
	\end{array}$

    \smallskip
    \noindent Note that we consider the lexicographic order on $\BINARYzero{n}$ to be cyclic, so $u$ and $v$ are consecutive. 

    \smallskip
	For the remaining cases we assume $w\in \BINARY{n} \setminus \{1^{n-i} 0 ^{i},  0^{n-i} 1^i\}$ without loss of generality.

    \medskip
	
	\proofcase{2} $\rrr{w_{n-i}} = \rrr{0}$.\\
	By Lemma \ref{lem:binaryNext} for $i$ and $w$ there exist $\ggg{u}, \bbb{v} \in \BINARY{n}$ such that $\rrr{w} = \ggg{u_i \cdots u_{n-1}} \bbb{v_0 \cdots v_{i-1}}$ and $\ggg{u}, \bbb{v}$ are consecutive in the lexicographic order of $\BINARY{n}$. 
	From $\bbb{v_0} = \rrr{w_{n-i}} = 0$ it follows that $\bbb{v} \in \BINARYzero{n}$. 
	Since $\ggg{u}$ is lexicographically smaller than $\bbb{v}$, and $w \neq 1^{n-i}0^i$, it follows that $\ggg{u_0} = \ggg{0}$ and hence $\ggg{u} \in \BINARYzero{n}$. 
	So $\ggg{u}$ and $\bbb{v}$ satisfy the required properties.  

    \medskip
	 
	\proofcase{3} $\rrr{w_{n-i}} = \rrr{1}$.\\
	Here we work with the complement of $\rrr{w}$. 
	By Lemma \ref{lem:binaryNext} for $i$ and $\flip{w}$ there exist $\ggg{u}, \bbb{v} \in \BINARY{n}$ such that $\rrr{\flip{w}} = \ggg{u_i \cdots u_{n-1}} \bbb{v_0 \cdots v_{i-1}}$ and $\ggg{u}, \bbb{v}$ are consecutive in the lexicographic order of $\BINARY{n}$. 
	Since $\rrr{w_{n-i}} = \rrr{1}$ we have $\bbb{v_0} =\rrr{\flip{w}_{n-i}} = 0$. 
	So $\bbb{v} \in \BINARYzero{n}$, and because $\ggg{u}$ is lexicographically smaller than $\bbb{v}$, and $w \neq 0^{n-i}1^i$, we also have $\ggg{u} \in \BINARYzero{n}$. 
	Then $\ggg{u}$ and $\bbb{v}$ are two consecutive words from $\BINARYzero{n}$ such that $ \rrr{w} \equiv \ggg{u_i \cdots u_{n-1}} \bbb{v_0 \cdots v_{i-1}}$. 
    \end{proof}

\subsection{Construction via Lexicographic Order}
\label{sec:unlabeled_construction}
We show that multiversal cycles of $\UNLABELED{n}$ can be constructed by concatenating several copies of each word from $\BINARYzero{n}$ in lexicographic order. Theorem \ref{thm:multiversalUnlabeled} below follows from Lemmas \ref{lem:unlabeledSame} and \ref{lem:unlabeledNext} with an analogous proof to that of Theorem \ref{thm:multiversalBinary} for multiversal cycles of $\BINARY{n}$.

\begin{theorem} \label{thm:multiversalUnlabeled}
	Let the lexicographic order of $\BINARYzero{n}$ be $\alpha_0, \alpha_1, \ldots, \alpha_{N-1}$ with $N = 2^{n-1}$.
    For any $k \geq 0$ the sequence $\alpha_0^k \alpha_1^k \cdots \alpha_{N-1}^k$ is a multiversal cycle of $\UNLABELED{n}$ with multiplicity~$r=kn$.
\end{theorem}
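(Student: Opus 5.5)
We follow the proof of Theorem~\ref{thm:multiversalBinary} almost verbatim, with Lemmas~\ref{lem:unlabeledSame} and~\ref{lem:unlabeledNext} in place of Lemmas~\ref{lem:binarySame} and~\ref{lem:binaryNext}. First we pin down what must be shown. Let $S \walrus \alpha_0^k \alpha_1^k \cdots \alpha_{N-1}^k$, so $|S| = kn \cdot 2^{n-1} = r \cdot |\UNLABELED{n}|$. In the unlabeled setting, Definition~\ref{def:multiversal} should be read up to complement. For each $0 \leq j < r$, the $N$ words of $\wordSet[n,j]{S}$ must then contain exactly one representative of each complementary pair $\{w,\flip{w}\}$. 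Since $\wordSet[n,j]{S}$ has exactly $N = 2^{n-1}$ entries, the number of such pairs, it suffices to prove \emph{surjectivity}: for every $w \in \BINARY{n}$ some window starting at a position $\equiv j \pmod r$ equals $w$ or $\flip{w}$. Counting then forces each class to be hit exactly once, and no window can equal both $w$ and $\flip{w}$.

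Next we fix $j$ and split by its position within a block, as before. The windows of $\wordSet[n,j]{S}$ start at positions $j + tr$ for $t \in [N]$. The block $\alpha_t^k$ occupies positions $tr, \ldots, tr + kn - 1$.

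\proofcase{A} $0 \leq j \leq (k-1)n$ (only possible when $k \geq 1$). Then the window starting at $j+tr$ lies entirely inside $\alpha_t^k$. Hence it is the length-$n$ subword of $\alpha_t\alpha_t$ starting at index $i = j \bmod n$, i.e.\ the $i$th rotation $\alpha_{t,i}\cdots\alpha_{t,n-1}\alpha_{t,0}\cdots\alpha_{t,i-1}$ of $\alpha_t$. As $t$ ranges over $[N]$, $\alpha_t$ ranges over all of $\BINARYzero{n}$. So Lemma~\ref{lem:unlabeledSame} with this $i$ gives surjectivity up to $\equiv$.

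\proofcase{B} $(k-1)n < j < kn$. Then the window starts in the last copy of $\alpha_t$ at index $i = j - (k-1)n \in [n]$, with $i \ge 1$, and runs into the first copy of $\alpha_{t+1}$ (indices mod $N$, so cyclically $\alpha_{N-1}$ is followed by $\alpha_0$). It is therefore $u_i\cdots u_{n-1}v_0\cdots v_{i-1}$ with $u=\alpha_t$ and $v=\alpha_{t+1}$ consecutive in cyclic lexicographic order of $\BINARYzero{n}$. As $t$ ranges over $[N]$, every consecutive pair occurs, so Lemma~\ref{lem:unlabeledNext} gives surjectivity. The cyclic wrap of $S$ matches the cyclic convention in that lemma, which is exactly why Case~1 of its proof uses the pair $(01^{n-1}, 0^n)$.

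The degenerate case $k=0$ gives the empty sequence with $r=0$, and the statement holds vacuously. The only step needing care is the reduction from ``every class is hit'' to ``every class is hit exactly once''. This is the pigeonhole count above, together with the observation that the classes are disjoint, since $w \neq \flip{w}$ for $n \geq 1$. Everything else is bookkeeping identical to Theorem~\ref{thm:multiversalBinary}.
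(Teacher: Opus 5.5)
Your proposal is correct and matches the paper's argument. The paper proves this theorem by rerunning the proof of Theorem~\ref{thm:multiversalBinary} with Lemmas~\ref{lem:unlabeledSame} and~\ref{lem:unlabeledNext} in place of Lemmas~\ref{lem:binarySame} and~\ref{lem:binaryNext}, splitting on whether a window lies inside one block $\alpha_t^k$ or straddles $\alpha_t$ and $\alpha_{t+1}$. Two of your details make explicit what the paper leaves implicit: the pigeonhole step ($2^{n-1}$ windows covering all $2^{n-1}$ complement classes hit each class exactly once), and the observation that the straddling case only ever uses offsets $i \geq 1$.
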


\section{Shorthand Twori for Permutations}
\label{sec:tworiPerms}

In this section we combine our previous results to show that shorthand universal twori exist for all odd $n$ by giving an explicit construction. 
We restate our main Theorem below. 

\tworusExists*

We first define a general construction of a two-row torus from a given list of edges and binary sequence. Then we show that with the right choice of edges and binary sequence, the construction gives a shorthand universal tworus for permutations.  

\begin{definition} \label{def:torus}

    We construct a torus $E \torusOp S$ from a list of edges $E$ and a binary sequence $S$ in the following two steps. 
    \begin{enumerate}
        \item For a single edge $e = \{u, v\}$ of $\Kn{n}$, where $u < v$, and a bit $w \in \{0, 1\}$ we define the corresponding column $\pairs{\pair{a}{b}}$ by setting $a = u$, $b = v$ if $w = 0$, and $a = v$, $b = u$ if $w = 1$. That means we have
        \begin{align*}
            \pairs{\pair{a}{b}} \walrus
                \begin{cases}
                    \pairs{\pair{u}{v}}, \text{ if } w = 0\\
                    \pairs{\pair{v}{u}}, \text{ if } w = 1.
                \end{cases}
        \end{align*}
        \item For a list $E = [e_0, \ldots, e_{l-1}]$ of not necessarily unique edges of $\Kn{n}$ and a binary sequence $S = w_0 w_1 \cdots w_{l-1}$ of length $l$ we define a $2$-by-$l$ torus
        \begin{align*}
            E \torusOp S \walrus \pairs{\pair{a_0}{b_0} \pair{a_1}{b_1} \cdots \pair{a_{l-1}}{b_{l-1}}}.
        \end{align*}
        In other words, each edge contributes a column, with its smaller or larger vertex in the top row depending on the corresponding bit in the binary sequence. 
    \end{enumerate}
\end{definition}


We create our shorthand universal torus of $\PERMS{n}$ by concatenating several copies of a universal cycle of maximum matchings of $\Kn{n}$.
The corresponding columns of the tworus are ordered according to an appropriate binary sequence as described above, such that the blocks in the tworus corresponding to each copy of the universal cycle all match, but no two blocks are identical. An example is given in Figure \ref{fig:blocks}.

\begin{lemma} \label{lem:tworus}
    Let $n = 2m+1 \geq 3$ be odd. Given a universal cycle $C$ of $\MATCHINGS[m]{\Kn{n}}$ and a multiversal cycle $S$ of $\UNLABELED{m}$ of multiplicity $\frac{n!}{2^m}$, the torus $C^{2^{m-1}} \torusOp S$ is a $2$-by-$\frac{n!}{2}$ shorthand universal torus of $\PERMS{n}$ with $2$-by-$m$ windows.
\end{lemma}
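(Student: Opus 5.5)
The plan is to set up an explicit bijection between the $2$-by-$m$ windows of $T \walrus C^{2^{m-1}} \torusOp S$ and the words of $\PERMS[n-1]{n}$, and then to check that the sizes agree.

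\textbf{Sizes.} Let $L \walrus |C| = |\MATCHINGS[m]{\Kn{n}}| = \frac{n!}{2^m}$ by Remark~\ref{rem:div}. By Definition~\ref{def:multiversal}, a multiversal cycle of $\UNLABELED{m}$ with multiplicity $L$ has length $L \cdot 2^{m-1} = \frac{n!}{2}$. This equals $|C^{2^{m-1}}|$, so $T$ is well defined and is a $2$-by-$\frac{n!}{2}$ torus. A window is determined by a starting column $p \in [\frac{n!}{2}]$ and a choice of which torus row is its top row, which gives two choices. Hence there are exactly $n! = |\PERMS[n-1]{n}|$ windows. It therefore suffices to show that every word of $\PERMS[n-1]{n}$ occurs in some window; each then occurs exactly once.

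\textbf{Encoding an $(n-1)$-permutation.} Take $x \in \PERMS[n-1]{n}$ and write it in row-major order as a top row $t_0 \cdots t_{m-1}$ and a bottom row $b_0 \cdots b_{m-1}$. All $2m$ values are distinct. So $M_x \walrus [\{t_0,b_0\},\ldots,\{t_{m-1},b_{m-1}\}]$ lies in $\MATCHINGS[m]{\Kn{n}}$. Define the orientation word $w_x \in \BINARY{m}$ by $w_i = 1$ if and only if $t_i > b_i$. Then $x \mapsto (M_x, w_x)$ is a bijection onto $\MATCHINGS[m]{\Kn{n}} \times \BINARY{m}$.

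\textbf{Reading a window.} Let $E = C^{2^{m-1}} = e_0 e_1 \cdots$ and $S = s_0 s_1 \cdots$. By Definition~\ref{def:torus}, the window at column $p$ with torus row $0$ on top reads as $(M,w)$ with
\[ M = [e_p,\ldots,e_{p+m-1}], \qquad w = s_p \cdots s_{p+m-1}. \]
Taking torus row $1$ on top swaps every column, so the window reads as $(M,\flip{w})$.

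\textbf{Finding $x$.} Fix $x$. Since $C$ is a universal cycle and $E$ has period $L$, the matching $M_x$ appears in $E$ cyclically exactly at the positions $p_0 + tL$ for $t \in [2^{m-1}]$. Here we use that $\frac{n!}{2}$ is a multiple of $L$, so windows that wrap around the torus still read consecutive entries of $C$. By Definition~\ref{def:multiversal} applied with $j = p_0 \bmod L$ and multiplicity $L$, the words $s_{p_0+tL} \cdots s_{p_0+tL+m-1}$ for $t \in [2^{m-1}]$ form a copy of $\UNLABELED{m}$. So exactly one $t$ satisfies $s_{p_0+tL}\cdots s_{p_0+tL+m-1} \equiv w_x$. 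If that word equals $w_x$, we choose row $0$ on top; if it equals $\flip{w_x}$, we choose row $1$ on top. Either way the window at column $p_0+tL$ spells $x$. This proves surjectivity, and with the count above, bijectivity.

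\textbf{Main obstacle.} The delicate part is the index bookkeeping. We must check that the congruence classes modulo $L$ used in the multiversal definition line up with the occurrences of $M_x$ in the repeated cycle. We must also check that windows crossing block boundaries or the torus seam are handled consistently. Both follow because $S$ is cyclic of length $2^{m-1}L$ and $E$ has period $L$. Finally, the orientation convention for $u<v$ in Definition~\ref{def:torus} must match the definition of $w_x$.
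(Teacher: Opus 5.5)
Your proposal is correct and follows essentially the same route as the paper: you count the $n!$ windows of the $2$-by-$\frac{n!}{2}$ torus, encode each shorthand permutation as an ordered matching plus an orientation word $w$, locate the $2^{m-1}$ occurrences of that matching in $C^{2^{m-1}}$, use the multiversal property at that residue class modulo $\frac{n!}{2^m}$ to find $w$ or $\overline{w}$, and choose the window's top row accordingly. Your explicit check that $|C|$ divides the torus length, so that wrapping windows still read consecutive entries of $C$, is a detail the paper leaves implicit.
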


\begin{proof}
    Let $n = 2m+1 \geq 3$ be an arbitrary odd integer. To ease notation we use $E = C^{2^{m-1}}$ to denote the sequence of edges consisting of $2^{m-1}$ repetitions of the universal cycle $C$ of $\MATCHINGS[m]{\Kn{n}}$.
    We construct the torus $E \torusOp S$ as described in Definition \ref{def:torus}. 
    Note that $|S| = \frac{n!}{2^m} \cdot |\BINARYzero{m}| = \frac{n!}{2}$ and by Remark \ref{rem:div} we also have $|E| = |C| \cdot 2^{m-1} = \frac{n!}{2}$.

    In a torus of dimension $2$-by-$\frac{n!}{2}$ there are precisely $n! = |\PERMS{n}|$
    distinct windows. It therefore suffices to show that every shorthand permutation from
    $\PERMS[n-1]{n}$ can be found in some window of the torus to see that every
    permutation appears exactly once. 

    Let $\pi = p_0 p_1 \cdots p_{2m-1} \in \PERMS[n-1]{n}$ be a shorthand permutation. 
    Consider the
    ordered matching $\pairs{\pair{p_0}{p_m}, \pair{p_1}{p_{m+1}}, \ldots,
    \pair{p_{m-1}}{p_{2m-1}}} \in \MATCHINGS[m]{\Kn{n}}$.
    The universal cycle
    $C$ contains this ordered matching starting at some index $t$, and thus the list of edges $E = C^{2^{m-1}}$ contains this matching at the indices $t + k \cdot \frac{n!}{2^m}$ for all $k \in [2^{m-1}]$. 
    Let $w = w_0 w_1 \cdots w_{m-1}$ be the binary sequence with $w_i = 0$ if $p_i < p_{i+m}$ and $w_i = 1$ if $p_i > p_{i+m}$. 
    Since $S$ is a multiversal cycle of $\UNLABELED{m}$ we know that $\wordSet[m,t]{S}$ is a maximal complement-free subset of $\BINARY{m}$.
    So there exists $k^* \in [2^{m-1}]$ for which $S$ contains either $w$ or $\flip{w}$ as a subword starting at index $s \walrus t+ k^* \cdot \frac{n!}{2^m}$.

    If $S$ contains $w$ itself as a subword, then by construction the $2$-by-$m$ block with entries $\pairs{\pair{p_0}{p_m} \pair{p_1}{p_{m+1}} \cdots
    \pair{p_{m-1}}{p_{2m-1}}}$ appears in the torus $E \torusOp S$. 
    Hence the corresponding $2$-by-$m$ window which has its top row in the first row of the torus and its leftmost column at position $t+ k^* \cdot \frac{n!}{2^m}$ contains the shorthand permutation $\pi$ in row-major order. 
    
    If $S$ contains $\flip{w}$ as a subword, then the torus $E \torusOp S$ contains the flipped block with entries $\pairs{\pair{p_m}{p_0} \pair{p_m+1}{p_{}} \cdots
    \pair{p_{2m-1}}{p_{m-1}}}$, since complementing a bit in $S$ corresponds to swapping the top and bottom entry of the corresponding column in the torus. 
    Then the $2$-by-$m$ window which has its leftmost column at position $t+ k^* \cdot \frac{n!}{2^m}$ and its first row in the bottom row of the torus, such that the second row wraps around to the top row of the torus, contains the shorthand permutation $\pi$ in row-major order. 
%
%
%
%
%
\end{proof}

Theorem \ref{thm:tworus} is a direct consequence of Lemma \ref{lem:tworus}.

\begin{proof}[Proof of Thm \ref{thm:tworus}]
    For $n = 1$ we simply get the trivial empty torus, so let $n = 2m+1  \geq 3$. 
    Theorem $\ref{thm:uniMatchings}$ guarantees the existence of a universal cycle $C$ of $\MATCHINGS[m]{\Kn{n}}$ and from Theorem \ref{thm:multiversalUnlabeled} for $k = \frac{n!}{2^m m}$ we get a multiversal cycle $S$ of $\UNLABELED{m}$ of multiplicity $\frac{n!}{2^m}$. Then by Lemma \ref{lem:tworus} the torus $C^{2^{m-1}} \torusOp S$ is a $2$-by-$\frac{n!}{2}$ shorthand universal torus of $\PERMS{n}$ with $2$-by-$m$ windows.
\end{proof}


\section{Algorithmic Results}
\label{sec:alg}

Now we present our main algorithmic results. 
First, we consider the construction of multiversal cycles. 
Since the words in $\BINARY{n}$ and $\BINARY[0]{n}$ can be generated in lexicographic order in amortized $\bigO{1}$-time per word \cite{ruskey2003combinatorial} we can generate our multiversal cycles efficiently.

\begin{theorem}
    \label{th:algo_binary_cycle}
    There is an algorithm for generating multiversal cycles for $\BINARY{m}$ of
    multiplicity $r=km$ that takes amortized $\bigO{1}$-time to generate each bit.
\end{theorem}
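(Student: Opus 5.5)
The plan is to output the sequence $\alpha_0^k \alpha_1^k \cdots \alpha_{N-1}^k$ from Theorem~\ref{thm:multiversalBinary}. That theorem already guarantees this is a multiversal cycle of $\BINARY{m}$ with multiplicity $r=km$, so correctness needs no further argument. What remains is efficiency: we need a generator for the lexicographic order of $\BINARY{m}$ that runs in amortized constant time per word. Each word is then written out $k$ times.

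For the generator, keep the current word $a_0 a_1 \cdots a_{m-1}$ in an array. To obtain the lexicographic successor, scan from the right end: turn every trailing $1$ into a $0$ until the first $0$ is found, then set that $0$ to $1$. When the word $1^m$ is reached, the order wraps around cyclically to $0^m$. A step that changes $j+1$ positions occurs for exactly $2^{m-j-1}$ of the $2^m$ words. Summing, the total work over one full pass is $\sum_j (j+1)\,2^{m-j-1} < 2 \cdot 2^m$, so the cost is $\bigO{1}$ amortized per successor (the standard analysis in \cite{ruskey2003combinatorial}).

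The output loop is: for each word in turn, emit its $m$ bits $k$ times, then call the successor routine. Each word costs $\bigO{km}$ for output plus $\bigO{1}$ amortized for the successor computation. That is $\bigO{1}$ per emitted bit, since $km \ge 1$. The degenerate case $k=0$ emits nothing and is trivial.

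The one point that needs care is the meaning of ``amortized per bit'': the successor cost must be spread over the $km$ output bits. The amortized $\bigO{1}$ per word bound makes this immediate, and each emitted bit costs $\bigO{1}$ directly. If the sequence is viewed as an infinite cyclic stream, the same per-pass accounting holds for every complete cycle.
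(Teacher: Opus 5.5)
Your proposal is correct and matches the paper's proof: emit each word of the lexicographic order of $\BINARY{m}$ $k$ times, as in Algorithm~\ref{alg:generate_multiversal_cycle}. Correctness comes from Theorem~\ref{thm:multiversalBinary}, and the $\bigO{r \cdot 2^m}$ total time for $r \cdot 2^m$ bits gives the amortized bound. (The worst-case $\bigO{m}$ cost of a single increment is already absorbed by the $km \geq m$ bits emitted per word, so your amortized analysis of the increment is not strictly needed.)
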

\begin{proof}
    Algorithm \ref{alg:generate_multiversal_cycle} takes $\mathcal O(r \cdot 2^m)$ time to 
    generate a sequence of $\mathcal O(r \cdot 2^m)$ bits. Incrementing the bitstring leads to lexicographic order, so correctness follows from Theorem~\ref{thm:multiversalBinary}.
\end{proof}

\begin{theorem}
    \label{th:algo_unlabeled_cycle}
    There is an algorithm for generating multiversal cycles for $\UNLABELED{m}$ of
    multiplicity $r=km$ that takes amortized $\bigO{1}$-time to generate each bit.
\end{theorem}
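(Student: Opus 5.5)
The plan mirrors the proof of Theorem~\ref{th:algo_binary_cycle}, replacing $\BINARY{m}$ by its representative set $\BINARYzero{m}$ and appealing to Theorem~\ref{thm:multiversalUnlabeled} for correctness. By that theorem, if $\alpha_0,\alpha_1,\ldots,\alpha_{N-1}$ with $N=2^{m-1}$ is the lexicographic order of $\BINARYzero{m}$, then $\alpha_0^k\alpha_1^k\cdots\alpha_{N-1}^k$ is a multiversal cycle of $\UNLABELED{m}$ with multiplicity $r=km$. So it suffices to output each word of $\BINARYzero{m}$ in lexicographic order, printing each one $k$ times.

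\textbf{The algorithm.} We keep a bit array $b_0\cdots b_{m-1}$, initialised to $0^m$, and loop as follows.
\begin{enumerate}
\item Output the current word $k$ times, bit by bit, at a cost of $\bigO{km}$.
\item Increment the suffix $b_1\cdots b_{m-1}$ as a binary counter: scan from the right, turning trailing $1$s into $0$s and the first $0$ found into a $1$.
\item If the scan runs past $b_1$ (the suffix was $1^{m-1}$), stop, since the word $01^{m-1}$ was the last one.
\end{enumerate}
Because $b_0$ is never touched, the generated words are exactly the elements of $\BINARYzero{m}$. Incrementing a binary counter produces lexicographic order, so the words appear in the required order.

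\textbf{Running time.} The total cost of the increments over all $2^{m-1}$ words is $\bigO{2^{m-1}}$ by the standard amortized counter argument, since each increment flips one more bit than the number of trailing ones it clears. Outputting the words costs $\bigO{km\cdot 2^{m-1}}=\bigO{r\cdot 2^{m-1}}$, which equals the number of bits produced. Hence the total time is linear in the output length, i.e.\ amortized $\bigO{1}$ per bit.

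\textbf{Main obstacle.} The one point needing care is the degenerate case $k=0$. The claimed bound is still fine there: we output nothing and can simply return immediately. Even if we ran the counter anyway, the work would only be $\bigO{2^{m-1}}$, which is harmless. With that case handled, the theorem follows.
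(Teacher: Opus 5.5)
Your proposal is correct and takes the same approach as the paper. The paper's proof reuses Algorithm~\ref{alg:generate_multiversal_cycle} with the main loop repeated only $2^{m-1}$ times, so the counter runs from $0^m$ to $01^{m-1}$ exactly as your suffix-only increment does, and it cites Theorem~\ref{thm:multiversalUnlabeled} for correctness; as a minor aside, you do not need the amortized counter argument, since for $k \geq 1$ even an $\bigO{m}$ worst-case increment is paid for by the $km \geq m$ bits output for that word.
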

\begin{proof}
    Analogous to Theorem \ref{th:algo_binary_cycle}, with correctness following
    from Theorem \ref{thm:multiversalUnlabeled}.
\end{proof}
\begin{algorithm}
    \SetKwFunction{GenerateMC}{GenerateMultiversalCycle}

    \Fn(){\GenerateMC{$m$, $r$}}{
        $k \gets r / m$\;
        $b = b_0\cdots b_{m - 1} \gets $ bitstring of $m$ zeroes\;
        \RepeatTimes{$2^m$} {
            \ForEach{$i \in [k \cdot m]$} {
                \Yield $b_{i \bmod m}$\;
            }
            increment $b$\;
        }
    }

    \caption{A remarkably simple algorithm to generate a multiversal cycle of $\BINARY{m}$ with multiplicity $r=km$.
    The increment step can be implemented in amortized $\bigO{1}$-time \cite{ruskey2003combinatorial} so each bit is generated in amortized $\bigO{1}$-time.
    A multiversal cycle of $\UNLABELED{m}$ with multiplicity $r=km$ is generated instead if the main block is repeated only $2^{m - 1}$ times.}
    \label{alg:generate_multiversal_cycle}
\end{algorithm}

\smallskip

Our construction of a tworus requires the construction of a universal cycle of ordered matchings $\MATCHINGS[m]{\Kn{n}}$.
This can be done by computing an Eulerian circuit in the associated transition graph.
However, once we have a shorthand universal cycle of $\MATCHINGS[m]{\Kn{n}}$, we can reuse it an exponential number of times to create a shorthand universal tworus for $\PERMS{n}$.
This leads to the following theorem.


\begin{theorem}
    \label{th:algo_tworus_known_cycle}
    Let $n = 2m + 1$. Given a universal cycle of the maximum ordered matchings of
    $\Kn{n}$, a $2$-by-$\frac{n!}{2}$ shorthand universal tworus for $\PERMS{n}$ can be
    generated in amortized $\bigO{1}$-time per column using additional space
    polynomial in $n$.
\end{theorem}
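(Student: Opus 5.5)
We prove Theorem~\ref{th:algo_tworus_known_cycle} by generating the torus $C^{2^{m-1}} \torusOp S$ of Lemma~\ref{lem:tworus} column by column. Column $t$ is determined by edge $c_{t \bmod |C|}$ of the given universal cycle $C$ and bit $s_t$ of the multiversal cycle $S$ of $\UNLABELED{m}$ with multiplicity $\frac{n!}{2^m}$ from Theorem~\ref{thm:multiversalUnlabeled}. Correctness is then immediate from Lemma~\ref{lem:tworus}, so the work lies in the time and space accounting.

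The algorithm runs two synchronized streams. The first stream is a pointer into $C$ that advances by one each step and wraps modulo $|C| = \frac{n!}{2^m}$. Since $C$ is given as input, reading its next edge $\{u,v\}$ costs $\bigO{1}$ and uses no additional space beyond an index. The second stream is Algorithm~\ref{alg:generate_multiversal_cycle} run on $\BINARYzero{m}$ with $k = \frac{n!}{2^m m}$. This $k$ is an integer by Remark~\ref{rem:div}. The algorithm yields bits $s_t$ in amortized $\bigO{1}$ time (Theorem~\ref{th:algo_unlabeled_cycle}) and stores only the current word $b \in \BINARYzero{m}$, a loop counter bounded by $km < n!$, and the repetition counter. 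Each counter takes $\bigO{\log n!} = \bigO{n\log n}$ bits, which is polynomial in $n$. Once both $\{u,v\}$ with $u<v$ and $s_t$ are available, we output column $\pairs{\pair{u}{v}}$ if $s_t = 0$ and $\pairs{\pair{v}{u}}$ otherwise, in $\bigO{1}$ time, as in Definition~\ref{def:torus}.

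The only step needing care is the amortized bound for the increment of $b$. Incrementing over $\BINARYzero{m}$ in lexicographic order is ordinary binary counting on the last $m-1$ bits. Over $2^{m-1}$ increments the total cost is $\bigO{2^{m-1}}$, and each increment is amortized over the $km$ bits yielded between increments, so the cost per column is in fact $\bigO{1/(km)}$ amortized. The one subtlety is the cyclic wrap from $01^{m-1}$ back to $0^m$, which simply resets $b$ and is covered by the same count. Summing over all $\frac{n!}{2}$ columns gives total time $\bigO{n!}$, which is amortized $\bigO{1}$ per column.

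Finally, the statement assumes that $C$ is provided. Its storage of size $\frac{n!}{2^m}$ is therefore input and is not counted as additional space. The additional space is the few counters above together with $b$, which totals $\bigO{n \log n}$ bits, or $\bigO{m}$ words if word size is taken to be $\Theta(\log n!)$.
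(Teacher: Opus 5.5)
Your proposal is correct and follows the paper's proof: you generate $C^{2^{m-1}}\torusOp S$ column by column with the lexicographic counter over $\BINARYzero{m}$ (exactly what Algorithm~\ref{alg:generate_tworus} does in a single merged loop), take correctness from Lemma~\ref{lem:tworus}, and bound the work by $\frac{n!}{2}$ amortized constant-time iterations with only polynomial working space. Your explicit bounds on counter sizes and on amortizing each increment over the $km$ columns between increments spell out what the paper states more briefly.
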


\begin{proof}
    Algorithm \ref{alg:generate_tworus} implements the tworus construction.  It
    implicitly constructs a multiversal cycle $S$ of $\UNLABELED{m}$, and given
    a universal cycle $M$ of ordered matchings of the $\Kn{n}$, it computes $M
    \torusOp S$ as defined in Definition \ref{def:torus}. The correctness
    follows from Lemma \ref{lem:tworus}.

    We compute the words of $\BINARYzero{m}$ in lexicographic order as in
    Algorithm \ref{alg:generate_multiversal_cycle}, to ensure that each
    successive bit can be computed in amortized constant time and that not all
    elements of $\BINARYzero{m}$ have to be stored in memory simultaneously.
    While $M$ has exponential size, this ensures that polynomial working
    space suffices.
    The inner loop is executed $|M| \cdot |\BINARYzero{m}| = \frac{n!}{2^m} \cdot 2^{m -
    1} = \frac{n!}{2}$ times and takes $\bigO{1}$ time for each iteration, which
    yields the runtime bound.
\end{proof}
\begin{algorithm}[htb]
    \SetKwFunction{GenerateTworus}{GenerateTworus} 
    \Fn(){\GenerateTworus{$n$, $M$}}{
        $m \gets \frac{n - 1}{2}$\;
        \tcp{Iterate over the unlabeled $m$-bit words in lexicographic order}
        $b = b_0 b_1 \cdots b_{m-1} \gets $ bitstring of $m$ zeroes \;
        \RepeatTimes{$2^{m - 1}$} {
            $i \gets 0$\;
            \tcp{Iterate over the edges in the universal cycle.}
            \ForEach{$e = \{u,v\} \in M$}{
                \eIf{$b_i = 0$}{
                    \Yield $\pair{u}{v}$
                 }{
                    \Yield $\pair{v}{u}$
                 }
                $i \gets (i+1) \bmod m$
            }
            increment $b$ \;
        }
    }
    \caption{Generate a tworus for $\PERMS{n}$ from $M$, a universal cycle of near perfect ordered matchings in the complete graph $\Kn{n}$ where $n=2m+1$.
    The tworus is a $2$-by-$\frac{n!}{2}$ shorthand universal torus with $2$-by-$m$ windows.
    Each column is yielded in amortized $\bigO{1}$-time.}
    \label{alg:generate_tworus}
\end{algorithm}



\section{Non-Existence Results}
\label{sec:nexists}

\newcommand{\symbolA}{\ensuremath{s}}
\newcommand{\symbolB}{\ensuremath{t}}
\newcommand{\columnA}{\ensuremath{A}}
\newcommand{\columnB}{\ensuremath{B}}

In Section \ref{sec:n5}, computer search determined that no shorthand universal torus exists
for $\PERMS{5}$ with 2-by-2 windows and torus sizes $3$-by-$40$, $5$-by-$24$, and $10$-by-$12$.
In this section, we give proofs for the nonexistence of the $3$-by-$40$ and $5$-by-$24$ tori without relying on exhaustive search.


\subsection{Torus with $\mathbf{R = 3}$}

\begin{lemma}
\label{lemma:t3_adj}
In a $3$-by-$40$ shorthand universal torus for $\PERMS{5}$ with $2$-by-$2$ windows, it is impossible for two adjacent columns to contain a common symbol, and it is impossible for a column to contain two copies of any symbol.
\end{lemma}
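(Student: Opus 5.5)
The plan is a direct pigeonhole-free argument: in a torus with only $R=3$ rows, every pair of rows is cyclically adjacent. Each entry must therefore share a $2$-by-$2$ window with every entry in its own column and in each neighbouring column. Since every window encodes a word of $\PERMS[4]{5}$, its four entries are pairwise distinct, and both claims follow.

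First I fix notation. Let $x_{a,j}$ denote the entry in row $a \in [3]$ and column $j \in [40]$, with indices taken modulo $3$ and $40$. The window with top-left corner $(a,j)$ consists of $x_{a,j}, x_{a+1,j}, x_{a,j+1}, x_{a+1,j+1}$. These four values must be distinct, because in a shorthand universal torus every window is a word of $\PERMS[4]{5}$.

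Next I prove the second claim, about a single column. Take two distinct rows $a \neq b$ in column $j$. Since $R=3$, either $b = a+1$ or $a = b+1 \pmod 3$. After relabeling, assume $b=a+1$. Then $x_{a,j}$ and $x_{a+1,j}$ both lie in the window with corner $(a,j)$, so they differ. Hence the three entries of each column are pairwise distinct.

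Then I prove the first claim, about adjacent columns $j$ and $j+1$. Take any rows $a,b$ and compare $x_{a,j}$ with $x_{b,j+1}$. If $b=a$, both entries lie in the window with corner $(a,j)$. If $b=a+1$, they again lie in the window with corner $(a,j)$. If $b=a-1=a+2 \pmod 3$, they lie in the window with corner $(a-1,j)$, which uses rows $a-1$ and $a$. In every case the two entries share a window, so $x_{a,j}\neq x_{b,j+1}$, and adjacent columns have no common symbol.

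I do not expect a real obstacle. The only point needing care is the observation that, for $R=3$, every pair of rows (including a row with itself) is covered by some pair of cyclically consecutive rows. This is exactly the property that fails for larger $R$.
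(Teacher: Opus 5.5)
Your proposal is correct and follows the same approach as the paper: both rest on the fact that every $2$-by-$2$ window encodes a word of $\PERMS[4]{5}$ and so has four distinct symbols. You additionally spell out the point the paper leaves implicit, namely that with $R=3$ any two rows (including wrap-around from row $2$ to row $0$) lie in a common window.
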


\begin{proof}
This follows directly from the fact that every $2$-by-$2$ window represents a permutation, and therefore may not contain any symbol twice.
\end{proof}

\tThreeNonexistence*

\begin{proof}
Any given symbol $\symbolA$ in the $3$-by-$40$ torus needs to occur $120/5 = 24$ times (by symmetry). By Lemma \ref{lemma:t3_adj}, $\symbolA$ can occur at most once in every second column, so at least $48$ columns are needed, but there are only $40$ columns.
\end{proof}

\subsection{Torus with $\mathbf{R = 5}$}

\begin{lemma}
\label{lemma:t5_notallpermutations}
In a $5$-by-$24$ shorthand universal torus for $\PERMS{5}$ with $2$-by-$2$
windows, it is impossible for every column to contain all five symbols exactly
once.
\end{lemma}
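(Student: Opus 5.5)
We argue by contradiction. Suppose every column of a $5$-by-$24$ shorthand universal torus for $\PERMS{5}$ with $2$-by-$2$ windows contains each of the five symbols exactly once. The plan has three steps. First, show that any two adjacent columns are cyclic rotations of each other by $2$ or $3$. Then conclude that every column is a rotation of a single fixed cyclic arrangement. Finally, count how many distinct windows such a torus can have, and find far fewer than $120$.

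\textbf{Step 1 (local constraint).} Let $\columnA = (A_0,\ldots,A_4)$ and $\columnB = (B_0,\ldots,B_4)$ be adjacent columns, with rows indexed mod $5$. The window at rows $i,i+1$ is $\pairs{\pair{A_i}{A_{i+1}} \pair{B_i}{B_{i+1}}}$. It encodes a permutation, so its four entries are distinct. Now fix a symbol $\symbolA = A_i$. The windows at rows $i-1,i$ and at rows $i,i+1$ both contain $A_i$. Hence $\symbolA \notin \{B_{i-1}, B_i, B_{i+1}\}$. So if $\symbolA$ sits at row $i$ of $\columnA$, it sits at row $i+d_\symbolA$ of $\columnB$ with $d_\symbolA \in \{2,3\}$.

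\textbf{Step 2 (rigidity).} I expect this to be the main step, though it is short. The map $i \mapsto i + d_{A_i}$ is a bijection of $\mathbb{Z}_5$, because every symbol appears exactly once in $\columnB$. Summing images and preimages gives $\sum_\symbolA d_\symbolA \equiv 0 \pmod 5$. This is a sum of five terms, each equal to $2$ or $3$, so it lies between $10$ and $15$. It is therefore either $10$ or $15$, which forces all $d_\symbolA$ to be equal. Consequently $\columnB$ is the rotation of $\columnA$ by $2$ or by $3$. By induction along the $24$ columns, every column is a rotation of the first column $\columnA$. In particular, all columns share the same cyclic order of the symbols.

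\textbf{Step 3 (counting).} Take any window, with columns $j,j+1$ and rows $i,i+1$. Write column $j$ as a rotation of $\columnA$, so its row-$i$ entry is $A_p$ for some $p \in \mathbb{Z}_5$. Then the window's entries in row-major order are $A_p, A_{p+1}, A_{p-c}, A_{p+1-c}$, where $c \in \{2,3\}$ is the rotation between columns $j$ and $j+1$. Thus the window is determined by the pair $(p,c)$, so at most $5 \cdot 2 = 10$ distinct shorthand permutations occur. But the torus has $120$ windows that must all be distinct, a contradiction.

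The only care needed is in Step 3: the window's content must depend only on $(p,c)$ and not on the column index $j$ itself. This holds because every column is some rotation of the same $\columnA$.
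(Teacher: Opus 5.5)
Your proof is correct and follows the paper's route. The paper shows adjacent columns must be rotations by $2$ or $3$, so only two adjacency types exist, and it uses pigeonhole to conclude that a window repeats once four consecutive columns are present; your bound of at most $10$ distinct windows is a global count of the same thing, and your mod-$5$ sum argument in Step 2 proves what the paper only asserts via a figure, namely that one symbol's shift determines the whole column. One harmless slip: in Step 3 you list the entries in column-major order (row-major is $A_p, A_{p-c}, A_{p+1}, A_{p+1-c}$), but the window is still determined by $(p,c)$.
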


\begin{proof}
If each column contains all five symbols then neighboring columns will be cyclic shifts by either 2 or 3 positions. This is because in a torus with $R=5$ no symbol may repeat in any $2$-by-$2$ window. Hence, for some symbol $\symbolA$ in some column $\columnA$, only two possible positions are valid in an adjacent column $\columnB$. Choosing any of these positions will determine the entire column, as shown in Figure \ref{fig:cycles}.

As we only have two distinct types of adjacencies between neighboring columns, as soon as there are four columns a window has to repeat.
\end{proof}

\begin{figure}
\centering
\includegraphics[scale=0.6]{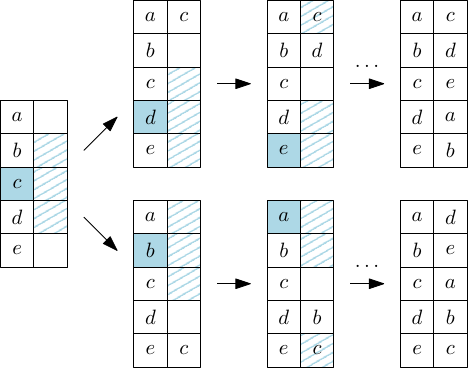}
\caption{Symbols determined in a $5$-by-$24$ shorthand universal torus for $\PERMS{5}$ with $2$-by-$2$ windows when an adjacent column contains all five symbols.}
\label{fig:cycles}
\end{figure}

\begin{lemma}
\label{lemma:t5_double}
In a $5$-by-$24$ shorthand universal torus for $\PERMS{5}$ with $2$-by-$2$
windows, if some column contains some symbol $\symbolA$ twice, then every second column
will contain two copies of $\symbolA$, and all the other columns will not contain any
copies of $\symbolA$.
\end{lemma}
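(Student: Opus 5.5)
Suppose column $\columnA$ contains $\symbolA$ twice, in rows $i$ and $j$. The plan is to propagate constraints to the two neighbouring columns, then iterate around the torus.

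\textbf{Local structure.} In a $5$-by-$24$ torus with $2$-by-$2$ windows, each column cell lies in two vertically adjacent pairs of rows (the rows are cyclic mod $5$). Every $2$-by-$2$ window has distinct entries, so two copies of $\symbolA$ in one column cannot be vertically adjacent. With $5$ cyclic rows, the positions $i$ and $j$ are therefore at cyclic distance $2$; say rows $0$ and $2$. Now look at an adjacent column $\columnB$. The windows spanning $\columnA$ and $\columnB$ in row pairs $(4,0)$, $(0,1)$, $(1,2)$ and $(2,3)$ all contain an $\symbolA$ from $\columnA$. Hence $\symbolA$ cannot occur in rows $4,0,1,2,3$ of $\columnB$, which is every row. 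So both neighbours of a column with a double $\symbolA$ contain no $\symbolA$.

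\textbf{Counting.} Every permutation appears exactly once. For each row-pair $(k,k+1)$ and each column pair $(c,c+1)$ there is one window, giving $5\cdot 24=120$ windows. By symmetry, the number of shorthand permutations in $\PERMS[4]{5}$ containing $\symbolA$ is $4\cdot 24=96$, so exactly $96$ windows contain $\symbolA$ and $24$ do not. I will count, for each column $c$, the number of windows in the column pair $(c,c+1)$ avoiding $\symbolA$. A column with no $\symbolA$ next to a column with no $\symbolA$ contributes $5$ avoiding windows. A column with a single $\symbolA$ blocks $2$ row-pairs, and a double blocks $4$.

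\textbf{Propagation.} Write $x_c\in\{0,1,2\}$ for the number of copies of $\symbolA$ in column $c$; the count of $\symbolA$ in any column is at most $2$ by the adjacency argument. Then $\sum_c x_c = 24$, since each symbol appears $24$ times overall. The number of $\symbolA$-free windows in the pair $(c,c+1)$ is at least $5-2x_c-2x_{c+1}$. The key point is that $x_c\ge1$ forces $x_{c+1}=0$: adjacent columns cannot share $\symbolA$ when rows could match. This is not automatic for singles, since $\symbolA$ in rows $0$ and $3$ of adjacent columns may coexist. So I will handle it more carefully.

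The main obstacle is showing that a single $\symbolA$ next to a double forces the stated alternation. I plan to use the fact that the double's neighbours have $x=0$. Then $\sum x_c=24$ over $24$ columns, with doubles isolated by zeros, requires a careful count of avoiding windows to equal exactly $24$. I expect this equality to force every column to be $2$ or $0$ alternately. Starting from the double at $\columnA$, its neighbours are $0$, so the average of $1$ must be made up elsewhere. I will bound the number of avoiding windows from below by $24$, with equality only when the pattern is $2,0,2,0,\dots$. If this inequality approach stalls, the fallback is a direct case check on the column after the zero column, using Lemma~\ref{lemma:t5_notallpermutations}-type arguments to rule out singles.
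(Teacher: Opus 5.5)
There is a genuine gap: the proposal stops exactly where the conclusion has to be drawn. Your local analysis is correct and matches the paper. Two copies of $s$ in a column sit at cyclic row distance $2$, so every cell of a neighbouring column shares a window with one of them, and $x_{c\pm1}=0$ whenever $x_c=2$. With $x_c\le 2$ and $\sum_c x_c=24$ you have all the ingredients.

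The step that rules out singles and stray empty columns is only ``expected'', and the tool you plan for it cannot deliver it. Counting $s$-free windows carries no information beyond $\sum_c x_c=24$. Every cell lies in exactly four windows and no window contains $s$ twice, so the number of $s$-free windows is identically $120-4\sum_c x_c$. Your per-pair bounds $5-2x_c-2x_{c+1}$ also sum to exactly this. Hence your claim of ``equality only for $2,0,2,0,\dots$'' is false as stated. Take a single $s$ in every column, with the row shifting by $+2$ twelve times and by $-2$ twelve times. This pattern satisfies all local constraints, has $\sum_c x_c=24$, and has exactly $24$ $s$-free windows. So the conclusion must come from the double and its empty neighbours, not from any window count. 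Your fallback case check is too vague to fill this.

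The missing step is a short double count. Let $N_j$ be the number of columns with $x_c=j$. Since $\sum_c x_c=24$ equals the number of columns, $\sum_c (x_c-1)=0$, so $N_2=N_0$. Now count adjacent column pairs with one column of each type. Each $2$-column has two $0$-neighbours, and each $0$-column has at most two $2$-neighbours, so $2N_2\le 2N_0=2N_2$. Equality forces every $0$-column to be flanked by two $2$-columns. Walking from $A$ around the $24$ columns then forces the strict alternation $2,0,2,0,\dots$ with no singles at all; since $24$ is even, this closes up consistently. This is essentially the paper's argument via maximal alternating runs of $2$- and $0$-columns. No window counting and no case analysis on singles is needed.
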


\begin{proof}
Any column may contain at most two copies of $\symbolA$. In total, $120/5 = 24$ copies
of $\symbolA$ are needed, which implies that there must be a column containing two
copies of $\symbolA$ for every column containing $0$ copies of $\symbolA$.

Let $\columnA$ be a column that contains two copies of $\symbolA$ and note these copies cannot
be (cyclically) adjacent. Therefore, each cell of the columns adjacent to $\columnA$ shares a $2$-by-$2$ window with at least one copy of $\symbolA$, and hence cannot contain $\symbolA$.

For every maximal sequence of alternating columns with $2$ and $0$ copies of
$\symbolA$, respectively, either the sequence contains each type of column the same number of times and cyclically wraps around the torus or there is
one more column with $0$ copies of $\symbolA$ than there is with $2$ copies of $\symbolA$. 
In the first case, the torus contains no columns apart from those of the alternating sequence, and in the latter case, the torus cannot contain such a sequence as it would be of odd length.
\end{proof}

\begin{lemma}
\label{lemma:t5_twodoubles}
In a $5$-by-$24$ shorthand universal torus for $\PERMS{5}$ with $2$-by-$2$
windows, for two distinct symbols $ \symbolA,\symbolB$ both of the following situations are
impossible:
\begin{enumerate}
\item There is a column $\columnA$ that contains two copies of $\symbolA$ and two
	copies of $\symbolB$.
\item There are columns $\columnA, \columnB$ such that $\columnA$ contains two copies of $\symbolA$
	and $\columnB$ contains two copies of $\symbolB$.
\end{enumerate}
\end{lemma}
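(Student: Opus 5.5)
Both parts rest on Lemma~\ref{lemma:t5_double}. If some column contains $\symbolA$ twice, then the $24$ columns alternate: every second column contains exactly two copies of $\symbolA$, and the others contain none. In particular, the columns with two copies of $\symbolA$ form one of the two parity classes of columns, namely all even-indexed or all odd-indexed columns.

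\textbf{Part 1.} Suppose column $\columnA$ contains two copies of $\symbolA$ and two copies of $\symbolB$.
\begin{itemize}
\item By Lemma~\ref{lemma:t5_double} applied to both symbols, the columns of $\columnA$'s parity class all contain two copies of each of $\symbolA$ and $\symbolB$. The other class contains neither symbol.
\item A column of that other class therefore has its $5$ cells filled from the remaining $3$ symbols.
\item Each of these $3$ symbols can appear at most twice in a column, since two copies may not be cyclically adjacent.
\item Each such column then has multiplicities $(2,2,1)$ over the three symbols. So some symbol $x \ne \symbolA,\symbolB$ appears twice in it.
\item Applying Lemma~\ref{lemma:t5_double} to $x$, every column in this class has two copies of $x$, and every column in $\columnA$'s class has none.
\item Count copies of $x$: $12 \cdot 2 = 24$, which is consistent. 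So counting alone does not give a contradiction.
\end{itemize}
To finish Part 1, I would count windows instead.
\begin{itemize}
\item The symbol $\symbolA$ appears $24$ times, all in $12$ columns with $2$ copies each. Each copy lies in $4$ windows, which must be distinct permutations containing $\symbolA$.
\item The number of windows containing $\symbolA$ in some position is $4 \cdot 24 = 96$. This is because each shorthand $4$-prefix omits one symbol, and $\symbolA$ is present in $4/5$ of the $120$ windows.
\item Each window contains $\symbolA$ at most once. So $96$ cell-window incidences equals $96$ windows, which is consistent again.
\item The actual constraint comes from the $2\times 2$ windows that straddle a column with two copies of $\symbolA$ and two copies of $\symbolB$ together with an adjacent $\{x,x,\ast,\ast,\ast\}$ column.
\item The column with $\symbolA,\symbolA,\symbolB,\symbolB$ has a fifth symbol $y$. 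The positions form a $5$-cycle with no adjacent equal symbols. Up to rotation and reflection the pattern is forced: $\symbolA\,\symbolB\,\symbolA\,\symbolB\,y$.
\item The neighbouring column, over symbols $\{x,x,p,q,\ldots\}$ from the remaining three, has a similarly forced shape.
\item I would show that the $5$ windows between two such columns repeat an unordered set pattern. Then I would count how often each ordered pair of (top, bottom) symbols can occur and get a contradiction with the requirement that each ordered pair appears in $120/20 = 6$ windows per column position.
\end{itemize}

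\textbf{Part 2.} Suppose $\columnA$ contains two copies of $\symbolA$ and $\columnB$ contains two copies of $\symbolB$.
\begin{itemize}
\item If $\columnA$ and $\columnB$ have the same parity, Lemma~\ref{lemma:t5_double} puts two copies of both symbols into $\columnA$, which contradicts Part 1.
\item So the two classes are split: one class has two copies of $\symbolA$ and no $\symbolB$, and the other has two copies of $\symbolB$ and no $\symbolA$.
\item Each column therefore has $4$ cells outside its doubled symbol, filled from the $3$ symbols that are neither $\symbolA$ nor $\symbolB$. Some third symbol $x$ is doubled somewhere in this pattern, and Lemma~\ref{lemma:t5_double} then puts it into one of the two classes.
\item That class now has two copies of two symbols, which contradicts Part 1.
\end{itemize}
This matches the multiplicities $(2,1,1,1)$ we would otherwise need: $4$ cells over $3$ symbols cannot all be singletons.

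\textbf{Main obstacle.} Part 2 reduces cleanly to Part 1 through Lemma~\ref{lemma:t5_double} and pigeonhole, so the real difficulty is Part 1. There the counting checks are all consistent, and I expect to need the local window argument to produce the contradiction. I would first try to show that two adjacent columns of the forced shapes $\symbolA\symbolB\symbolA\symbolB y$ and $x\,p\,x\,q\,x'$ must create a window that repeats one already forced elsewhere. Failing that, I would aim for an exact window count: the windows in which $\symbolA$ and $\symbolB$ sit in the same column would then have to be far more numerous than the $2\cdot 4!\cdot$ allowed.
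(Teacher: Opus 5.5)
There are two genuine gaps: Part 1 is never finished, and the pigeonhole step in Part 2 rests on a miscount.

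\textbf{Part 1.} From Lemma~\ref{lemma:t5_double} you correctly get that $s$ and $t$ occur, doubled, in exactly one parity class of columns and are absent from the other. You then drop this structure in favour of window counts, which, as you note yourself, are all consistent. The plan with forced shapes and ordered-pair counts is never carried out, so Part 1 is not proved. The missing step follows directly from what you already have. Since $s$ and $t$ only occur in columns of the same parity, they never lie in two adjacent columns. So no $2$-by-$2$ window can have $s$ in its left column and $t$ in its right column. But any shorthand permutation $s\,t\,\ast\,\ast$ (row-major) must appear as a window $\left[\begin{smallmatrix} s & t \\ \ast & \ast \end{smallmatrix}\right]$, which is a contradiction. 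That is the whole proof of Part 1.

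\textbf{Part 2.} Reducing the same-parity case to Part 1 is fine. The pigeonhole step after it is not. A column with two copies of $s$ and no $t$ has only $5-2=3$ remaining cells, not $4$. These three cells can be filled by the three other symbols once each, for example $s\,x\,s\,y\,z$ around the column. So no third symbol is forced to be doubled. In fact, if every column looks like this, each of $x,y,z$ appears exactly $24$ times, so counting gives nothing here either. The correct finish is again a forbidden window. Once the $s$-columns and the $t$-columns are the two opposite parity classes, no column contains both $s$ and $t$. Hence no window has $s$ directly above $t$, yet the shorthand permutation encoded by $\left[\begin{smallmatrix} s & \ast \\ t & \ast \end{smallmatrix}\right]$ must appear.
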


\NewDocumentCommand{\smallbmatrix}{m}{%
    \ensuremath{\left[\begin{smallmatrix}%
        #1%
    \end{smallmatrix}\right]}%
}

\begin{proof}
In the first case, by Lemma \ref{lemma:t5_double}, we know that every second column will contain two copies of $\symbolA$ and $\symbolB$ each, and all the other columns must not contain any $\symbolA$ or $\symbolB$. But then, no window of the type $\smallbmatrix{\symbolA & \symbolB \\ \phantom{x} & \phantom{x}}$ can exist.

In the second case, by Lemma \ref{lemma:t5_double}, we know that there is no column that contains copies of both $\symbolA$ and of $\symbolB$. But then, no window of the shape $\smallbmatrix{ \symbolA & \phantom{x} \\ \symbolB & \phantom{x} }$ can exist.
\end{proof}

\tFiveNonexistence*

\begin{proof}
By Lemma \ref{lemma:t5_notallpermutations} there is at least one column $\columnA$ that contains some symbol $\symbolA$ twice. For an adjacent column $\columnB$, by Lemma \ref{lemma:t5_double} no $\symbolA$ can occur in $\columnB$. Because $\columnB$ still contains five cells, some other symbol has to occur twice in $\columnB$. This contradicts Lemma \ref{lemma:t5_twodoubles}.
\end{proof}

\section{Final Remarks}
\label{sec:final}

In this paper we initiated the study of shorthand universal tori of permutations $\PERMS{n}$.
We used exhaustive computation to search for the existence and non-existence for $2$-by-$2$ windows tori with $n=5$.
Unlike more traditional de Bruijn tori for binary and $q$-ary words, our computational results did not immediately lead to an obvious conjecture for existence. 

We proved that universal tori of $\PERMS[n-1]{n}$ exist when $n=2m+1$ and the torus and windows have two rows (i.e., $R=r=2$).
Somewhat playfully, we refer to these tori as \emph{twori}.

Following a recent trend in Gray codes we showed that twori exist with a high degree of symmetry.
More specifically, there are twori that partition into $2^{m-1}$ matching blocks.
We describe the block structure as a universal cycle of ordered maximum matchings in the complete graph $\Kn{n}$.
In fact, we characterized when universal cycles of $\MATCHINGS[k]{\Kn{n}}$ exist.

The second ingredient in our tworus construction involves multiversal cycles.
We extended an existing result \cite{alvarez2016perfect, fillmore2026existence} by showing that lexicographic concatenations produce multiversal cycles of both binary words $\BINARY{m}$ and unlabeled words $\UNLABELED{m}$.
These concatenations can be generated very efficiently.
As a result, if we are given one block (i.e., a universal cycle of $\MATCHINGS[m]{\Kn{n}}$), then we can generate successive columns in a tworus in amortized $\bigO{1}$-time.

The most natural open problem is for which values $(R,C;r,c)$ do shorthand universal tori exist.
In future work we would like to construct a specific universal cycle for $\MATCHINGS[m]{\Kn{n}}$ that has an efficient successor rule \cite{gabric2019successor,sawada2026concatenation}. 
This will allow us to efficiently generate a specific tworus without the assumptions of Theorem \ref{th:algo_tworus_known_cycle}.
Having a specific tworus would also allow us to consider the position encoding/decoding problem \cite{shiu1997decoding,horan2016locating,etzion2024bruijn}.
Researchers can also consider tori beyond two dimensions~\cite{roig2021review}, multiset permutations rather than permutations \cite{sawada2021universal,sawada2023constructing}, or representations other than shorthand (e.g., order isomorphism \cite{johnson2009universal}).

\section*{Acknowledgements}

This paper originated at the workshop \emph{Combinatorics, Algorithm and Geometry}, which took place in Kassel in March 2026. 
Workshop funding from the DFG priority programme SPP 2458 \emph{Combinatorial Synergies} and the DFG Heisenberg grant 522790373 \emph{Combinatorial Algorithms} is gratefully acknowledged.
The authors wish to thank Stefan Felsner (TU Berlin) and Torsten M\"utze (University of Kassel) for productive discussions during the workshop.
The reviewers also helped correct several small but important errors and pointed us to the literature on perfect necklaces. 


\small
\bibliographystyle{abbrv}
\bibliography{refs}

\end{document}